\theoremstyle{plain}
\newtheorem{theorem}{Theorem}
\newtheorem{lemma}[theorem]{Lemma}
\newtheorem{proposition}[theorem]{Proposition}
\newtheorem{corollary}[theorem]{Corollary}
\newtheorem*{claim*}{Claim}
\theoremstyle{definition}
\newtheorem{definition}[theorem]{Definition}
\newtheorem{example}[theorem]{Example}
\newtheorem{remark}[theorem]{Remark}
\newtheorem{convention}[theorem]{Convention}
\numberwithin{theorem}{section}
\numberwithin{equation}{section}
\newcommand{\fakeenv}{} 
\newenvironment{restate}[2]  
{ 
 \renewcommand{\fakeenv}{#2} 
 \theoremstyle{plain} 
 \newtheorem*{\fakeenv}{#1~\ref{#2}} 
 \begin{\fakeenv}
}
{
 \end{\fakeenv}
}
\newcommand{\CC}{\mathbb{C}} 
\newcommand{\FF}{\mathbb{F}}
\newcommand{\NN}{\mathbb{N}}
\newcommand{\ZZ}{\mathbb{Z}} 
\newcommand{\calA}{\mathcal{A}}
\newcommand{\sfE}{\mathsf{E}}
\newcommand{\sfV}{\mathsf{V}}
\newcommand{\bxi}{{\boldsymbol \xi}}
\newcommand{\abs}[1]{\left\lvert {#1} \right\rvert} 
\newcommand{\I}[1]{\langle #1 \rangle}
\newcommand{\bd}{\partial}
\newcommand{\from}{\colon\thinspace}
\newcommand{\inv}{^{-1}}
\newcommand{\opnorm}[2][]{\left\| {#2} \right\|_{#1}}
\newcommand{\detG}[1][1]{%
\ifthenelse{\equal{#1}{1}}%
{\operatorname{det}_{G}}%
{\operatorname{det}_{#1}}%
}
\newcommand{\param}%
	{{\mathchoice{\mkern1mu\mbox{\raise2.2pt\hbox{$\centerdot$}}\mkern1mu}%
	{\mkern1mu\mbox{\raise2.2pt\hbox{$\centerdot$}}\mkern1mu}%
	{\mkern1.5mu\centerdot\mkern1.5mu}{\mkern1.5mu\centerdot\mkern1.5mu}}}
\newcommand{\PF}{\mathcal{EG}}
\DeclareMathOperator{\Aut}{Aut}
\DeclareMathOperator{\End}{End}
\DeclareMathOperator{\Mat}{Mat}
\DeclareMathOperator{\Out}{Out}
\DeclareMathOperator{\clos}{clos}
\DeclareMathOperator{\img}{im}
\DeclareMathOperator{\rank}{rk}
\DeclareMathOperator{\Tr}{tr}
\DeclareMathOperator{\vol}{vol}
\begin{document}


\title{$\ell^{2}$--torsion of free-by-cyclic groups}

\author[M.~Clay]{Matt Clay}
\address{Dept.\ of Mathematics \\
  University of Arkansas\\
  Fayetteville, AR 72701}
\email{\href{mailto:mattclay@uark.edu}{mattclay@uark.edu}}

\thanks{\tiny The author is partially supported by the Simons Foundation.}

\begin{abstract}
We provide an upper bound on the $\ell^{2}$--torsion of a free-by-cyclic group, $-\rho^{(2)}(\FF \rtimes_{\Phi} \ZZ)$, in terms of a relative train-track representative for $\Phi \in \Aut(\FF)$.  Our result shares features with a theorem of L\"uck--Schick computing the $\ell^{2}$--torsion of the fundamental group of a 3--manifold that fibers over the circle in that it shows that the $\ell^{2}$--torsion is determined by the exponential dynamics of the monodromy.  In light of the result of L\"uck--Schick, a special case of our bound is analogous to the bound on the volume of a 3--manifold that fibers over the circle with pseudo-Anosov monodromy by the normalized entropy recently demonstrated by Kojima--McShane.
\end{abstract}

\maketitle


\section{Introduction}
\label{sec:introduction}

A group $G$ is a \emph{free-by-cyclic group} if it fits into a short exact sequence of the form: \[ 1 \to \FF \to G \to \ZZ \to 1 \] where $\FF$ is a finitely generated free group.
Such a group is a semi-direct product and admits a presentation of the form
\begin{equation}
\label{eq:free-by-cyclic} \FF \rtimes_{\Phi} \ZZ = \I{\FF, t \mid t\inv x t = \Phi(x) \mbox{ for } x \in \FF}
\end{equation}
for some automorphism $\Phi \in \Aut(\FF)$.  Changing $\Phi$ within its outer automorphism  class amounts to replacing the generator $t$ by $tx$ for some $x \in \FF$ and so we are justified in denoting the group in \eqref{eq:free-by-cyclic} by $G_{\phi}$ where $\phi = [\Phi] \in \Out(\FF)$.

These groups share a deep connection with 3--manifolds that fiber over $S^{1}$.  Indeed, assuming for simplicity that the fiber is connected, such a manifold is the \emph{mapping torus}: \[ M_{f} = \raisebox{5pt}{$\Sigma \times [0,1]$} \Big/ \raisebox{-5pt}{$(x,0) \sim (f(x),1)$} \] for a homeomorphism of a connected surface $f \from \Sigma \to \Sigma$ and $\pi_{1}(M_{f}) \cong \pi_{1}(\Sigma) \rtimes_{\Phi} \ZZ$ where $\Phi \in \Aut(\pi_{1}(\Sigma))$ represents the outer automorphism induced by $f$.

As for 3--manifolds that fiber over $S^{1}$, a free-by-cyclic group can be expressed as a semi-direct product in infinitely many ways when the rank of the abelianization is at least two~\cite{ar:Neumann76,ar:Button07}.  The connections between the first homology of a fibered 3--manifold, the topology of the fiber and the dynamics of the monodromy has been extensively studied by  Thurston~\cite{ar:Thurston86}, Fried~\cite{ar:Fried82-1,ar:Fried82-2} and McMullen~\cite{ar:McMullen00}.  The analogous study for free-by-cyclic groups has recently been initiated and developed by Dowdall--Kapovich--Leininger~\cite{ar:DKL15,un:DKL-2} and Algom-Kfir--Hironaka--Rafi~\cite{ar:A-KHR15}.

Thurston's hyperbolization theorem implies that a compact orientable 3--manifold that fibers over $S^{1}$ can be canonically decomposed along incompressible tori such that the components are geometric~\cite{ar:Thurston82}.  We are most interested in those components whose interiors admit a complete hyperbolic metric.  Mostow rigidity implies that the metric in this case is unique~\cite{bk:Mostow73}.  

In the most interesting case when $M_{f}$ does not contain an incompressible torus, the interior of $M_{f}$ admits a unique hyperbolic metric.  For such mapping tori, Brock related the volume, $\vol(M_{f})$, to the translation length of $f$ on the Teichm\"uller space for $\Sigma$ with the Weil--Petersson metric~\cite{ar:Brock03}.  This in particular implies that the volume is bounded above by $\log \lambda(f)$ times a constant that only depends on $\Sigma$.  Here $\lambda(f)$ is the \emph{dilatation} or \emph{stretch factor} of $f$: 
\begin{equation}
\label{eq:dilatation}
\lambda(f) = \sup_{\gamma \subset \Sigma} \lim_{k \to \infty} \root k\of{\ell_{\sigma}(f^{k}(\gamma))} 
\end{equation}
where the supremum is over the set of simple closed curves on $\Sigma$ and $\ell_{\sigma}(\param)$ denotes the length of the unique geodesic in the homotopy class using the hyperbolic metric $\sigma$ on $\Sigma$.  The choice of metric does not matter.  If $f$ is a pseudo-Anosov homeomorphism, the logarithm of $\lambda(f)$ is the topological entropy of $f$~\cite[Expos\'e~10]{bk:FLP79}.

An explicit upper bound for the previously mentioned constant was recently found by Kojima--McShane~\cite{un:KM14}.  If $f \from \Sigma \to \Sigma$ is a pseudo-Anosov homeomorphism they showed that:
\begin{equation}
\label{eq:KM14}
\vol(M_{f}) \leq 3\pi\abs{\chi(\Sigma)}\log \lambda (f).
\end{equation}
A corollary of the main result of this paper is an analog of this equation for free-by-cyclic groups (Corollary~\ref{co:iwip}).  We explain what our replacement for the left-hand side of this inequality is now.

A free-by-cyclic group does not posses a canonical geometry.  Some free-by-cyclic groups act properly discontinuously and cocompactly on CAT(0) or CAT($-1$) spaces and some do not.  By work of Brinkmann~\cite{ar:Brinkmann00} and Bestvina--Feighn~\cite{ar:BF92}, it is known that a free-by-cyclic group $G_{\phi}$ is a word-hyperbolic group if and only if $\phi \in \Out(\FF)$ is \emph{atoroidal}, that is, $\phi$ does not act on the set of non-trivial conjugacy classes of $\FF$ with a periodic orbit.  Neither of these structures can associate to a free-by-cyclic group a canonical notion of ``volume.''

In this paper, we consider an analytic invariant of certain semi-direct products $G \rtimes_{\Phi} \ZZ$ called the \emph{$\ell^{2}$--torsion}, denoted $\rho^{(2)}(G \rtimes_{\Phi} \ZZ)$ (see Section~\ref{sec:auto torsion} for the definition).  This quantity is related to the Alexander polynomial, Reidemeister torsion and Ray--Singer torsion; these connections and further motivation appear in~\cite[Chapter~3]{bk:Luck02}.  Our interest in $\ell^{2}$--torsion is the following special case of a theorem of L\"uck--Schick~\cite{ar:LS99}, see also~\cite[Theorem~4.3]{bk:Luck02}.  If $f \from \Sigma \to \Sigma$ is a homeomorphism of a compact connected surface and $\Phi \in \Aut(\pi_{1}(\Sigma))$ represents the outer automorphism induced by $f$ then $-\rho^{(2)}(\pi_{1}(\Sigma) \rtimes_{\Phi} \ZZ)$ equals $\frac{1}{6 \pi}$ times the sum of the volumes of the hyperbolic components of $M_{f}$.  In particular, the $\ell^{2}$--torsion vanishes exactly when there are no hyperbolic pieces.  Equivalently, the $\ell^{2}$--torsion vanishes exactly if $\lambda(f) = 1$.  

Motivated by this result, we consider $-\rho^{(2)}(G_{\phi})$ as an appropriate analog of volume for a free-by-cyclic group.

Our main result (Theorem~\ref{th:bound}, reprinted below) provides an upper bound on $-\rho^{(2)}(G_{\phi})$ for a free-by-cyclic group $G_{\phi}$; it is already known that $0 \leq -\rho^{(2)}(G_{\phi})$~\cite[Theorem~7.29]{bk:Luck02}.  Our bound employs the Bestvina--Handel theory of relative train-track maps~\cite{ar:BH92} and L\"uck's combinatorial approach to computing the $\ell^{2}$--torsion of free-by-cyclic groups~\cite[Section~7.4.3]{bk:Luck02}.  

The decomposition of an outer automorphism of a free group is more subtle and intricate than the decomposition of a mapping class of a surface.  It is not the case that there is a decomposition of the free group into compatible free factors on which the the outer automorphism acts irreducibly but rather a filtration of the free group where larger elements are allowed to interact via the outer automorphism with the smaller ones but not conversely.  As such, it is not clear what the exact analog of the theorem of L\"uck--Schick should be in this case.  Nonetheless, our bound does decompose into pieces corresponding to the parts of the filtration exhibiting exponential dynamics and in this sense it shares similarities to their result. 

The tool for best understanding the dynamics of a typical outer automorphism are \emph{relative train-track maps} as introduced by Bestvina--Handel~\cite{ar:BH92}.  This is a homotopy equivalence $f \from \Gamma \to \Gamma$ of a finite connected graph representing a given outer automorphism $\phi \in \Out(\FF)$ that respects a filtration $\emptyset = \Gamma_{0} \subset \Gamma_{1} \subset \cdots \subset \Gamma_{S} = \Gamma$ in the sense that $f(\Gamma_{s}) \subseteq \Gamma_{s}$.  For each $1 \leq s \leq S$ there is an associated matrix $M(f)_{s}$, this is the transition matrix for the induced map $\Gamma_{s}/\Gamma_{s-1} \to \Gamma_{s}/\Gamma_{s-1}$.  We may assume that each transition matrix is either the zero matrix or irreducible; in the latter case it has a Perron--Frobenius eigenvalue which we denote $\lambda(f)_{s}$.  The $s$th stratum is \emph{exponentially growing} if $\lambda(f)_{s} > 1$.

When $f \from \Gamma \to \Gamma$ is \emph{irreducible}, i.e., $\Gamma_{1} = \Gamma$, it is known that $\lambda(f)_{1}$ is the \emph{exponential growth rate} or \emph{stretch factor} of $\phi$ (compare \eqref{eq:dilatation}):
\begin{equation}
\label{eq:stretch}
\lambda(\phi) = \sup_{\gamma \in [\FF]} \lim_{k \to \infty} \root k\of{\| \phi^{k}(\gamma) \|_{\calA}}
\end{equation}
where the supremum is over the set of conjugacy classes of elements of $\FF$ and $\| \param \|_{\calA}$ denotes the length of a cyclically reduced word representing the conjugacy class with respect to the basis $\calA$ (see \cite[Remark~1.8]{ar:BH92}).  The choice of basis does not matter.  If $f$ is an irreducible train-track, the logarithm of $\lambda(f)_{1}$ is the topological entropy of $f$~\cite[Proposition~2.8]{ar:DKL15}.

With this, we can now state our main result.  

\begin{restate}{Theorem}{th:bound}
Suppose $f \from \Gamma \to \Gamma$ is a relative train-track map with respect to the filtration $\emptyset = \Gamma_{0} \subset \Gamma_{1} \subset \cdots \subset \Gamma_{S} = \Gamma$ representing the outer automorphism $\phi \in \Out(\FF)$.  Let $n_{s}$ denote the number of edges of $\Gamma_{s} - \Gamma_{s-1}$, $\lambda(f)_{s}$ the Perron--Frobenius eigenvalue of $M(f)_{s}$ and $\PF(f)$ the set of indices of the exponentially growing stratum. Then:
\begin{equation*}
-\rho^{(2)}(G_{\phi}) \leq \sum_{s \in \PF(f)} n_{s} \log \lambda(f)_{s}.
\end{equation*} 
In particular, if $f \from \Gamma \to \Gamma$ does not have an exponentially growing stratum then $\rho^{(2)}(G_{\phi}) = 0$.
\end{restate}

Since the number of edges in $\Gamma$ is bounded by $3\rank(\FF) - 3 = 3\abs{\chi(\FF)}$, as a corollary we obtain an inequality akin to the inequality of Kojima--McShane \eqref{eq:KM14}.

\begin{corollary}
\label{co:iwip}
Suppose $\phi \in \Out(\FF)$ can be represented by an irreducible train-track map.  Then:
\begin{equation}
\label{eq:iwip}
-\rho^{(2)}(G_{\phi}) \leq 3\abs{\chi(\FF)} \log \lambda(\phi).
\end{equation}
\end{corollary}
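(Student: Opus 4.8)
The plan is to derive Corollary~\ref{co:iwip} directly from Theorem~\ref{th:bound} by choosing the right relative train-track representative and bounding the single relevant term. Since $\phi$ is represented by an irreducible train-track map $f \from \Gamma \to \Gamma$, the filtration is trivial: $\emptyset = \Gamma_{0} \subset \Gamma_{1} = \Gamma$, so there is exactly one stratum, $S = 1$. First I would observe that because $f$ is a train-track map (not just relative) and represents $\phi$, the transition matrix $M(f)_{1}$ is irreducible, and — invoking the discussion preceding the statement of Theorem~\ref{th:bound} — its Perron--Frobenius eigenvalue satisfies $\lambda(f)_{1} = \lambda(\phi)$, the stretch factor of $\phi$ as defined in \eqref{eq:stretch}.

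Next I would split into two cases according to whether the stratum is exponentially growing. If $\lambda(\phi) > 1$, then $\PF(f) = \{1\}$ and Theorem~\ref{th:bound} gives
\[
-\rho^{(2)}(G_{\phi}) \leq n_{1}\log\lambda(f)_{1} = n_{1}\log\lambda(\phi),
\]
where $n_{1}$ is the number of edges of $\Gamma_{1} - \Gamma_{0} = \Gamma$. It remains to bound $n_{1}$. Since $f \from \Gamma \to \Gamma$ is a homotopy equivalence of a finite connected graph with $\pi_{1}(\Gamma) \cong \FF$, and a train-track representative may be taken with no valence-one or valence-two vertices (valence-one vertices can be removed and valence-two vertices subdivided away, or one simply appeals to the standard fact that Bestvina--Handel representatives have all vertices of valence at least three), the number of edges $n_{1}$ is at most $3\rank(\FF) - 3 = 3\abs{\chi(\FF)}$: indeed for a connected graph with $V$ vertices, $E$ edges, and first Betti number $r = \rank(\FF)$ one has $V - E = 1 - r$, and if every vertex has valence $\geq 3$ then $2E \geq 3V = 3(E + 1 - r)$, whence $E \leq 3r - 3$. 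Substituting $n_{1} \leq 3\abs{\chi(\FF)}$ into the displayed inequality yields \eqref{eq:iwip}. In the remaining case $\lambda(\phi) = 1$, the stratum is not exponentially growing, so $\PF(f) = \emptyset$ and Theorem~\ref{th:bound} gives $\rho^{(2)}(G_{\phi}) = 0$; meanwhile the right-hand side of \eqref{eq:iwip} is $3\abs{\chi(\FF)}\log 1 = 0$, so the inequality holds (with equality).

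The only genuine subtlety — and the place I would be most careful — is the bound $n_{1} \leq 3\abs{\chi(\FF)}$: one must ensure that the train-track representative can be chosen, or modified, so that every vertex has valence at least three without disturbing the train-track property or the value $\lambda(f)_{1} = \lambda(\phi)$. This is standard in the Bestvina--Handel framework (collapsing a non-loop edge at a valence-two vertex, or removing a valence-one vertex and its edge, changes neither the homotopy type nor the transition data in a way that increases $\lambda$), so I would state it briefly and cite~\cite{ar:BH92}. Everything else is immediate from Theorem~\ref{th:bound} and the identification of $\lambda(f)_{1}$ with $\lambda(\phi)$ already recorded in the text. I would also remark, as the paper does, that the chain $\abs{\text{edges of }\Gamma} \leq 3\rank(\FF) - 3 = 3\abs{\chi(\FF)}$ is exactly the inequality alluded to immediately before the corollary, so the proof amounts to combining that observation with the single-stratum case of the main theorem.
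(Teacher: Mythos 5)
Your proposal is correct and matches the paper's argument exactly: the paper treats this corollary as immediate from Theorem~\ref{th:bound} (single stratum, $\lambda(f)_1 = \lambda(\phi)$) together with the edge-count bound $n_1 \leq 3\rank(\FF) - 3 = 3\abs{\chi(\FF)}$ for a graph with no valence-one or valence-two vertices, which the paper states in the sentence preceding the corollary. Your elaboration, including the Euler-characteristic computation and the degenerate case $\lambda(\phi) = 1$, is accurate, though note the phrase ``subdivided away'' should read ``collapsed away'' for valence-two vertices, as you correctly say later.
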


Kin--Kojima--Takasawa exhibited a positive lower bound on $\vol(M_{f})$, specifically $\log \lambda(f)$ times a positive constant that depends on $\Sigma$ and the length of a systole of $M_{f}$~\cite{ar:KKT09}.  It would be very interesting to exhibit an analogous lower bound on $-\rho^{(2)}(G_{\phi})$ or even a criterion for positivity.  Conjecturally, $-\rho^{(2)}(G_{\phi}) > 0$ if $\PF(\phi) \neq \emptyset$.  An explicit lower bound could provide information on the ratio $\frac{\log \lambda(\phi)}{\log \lambda(\phi\inv)}$ (cf.~\cite{ar:HM07-2,ar:DKL14}).    

\medskip 

This paper is organized as follows.  Section~\ref{sec:graph} provides background on relative train-track maps.  The necessary $\ell^{2}$--theory needed to define $\ell^{2}$--torsion is briefly introduced in Section~\ref{sec:torsion}.  In Section~\ref{sec:compute} we tailor L\"uck's combinatorial approach to the setting of relative train-track maps.  The proof of Theorem~\ref{th:bound} appears in Section~\ref{sec:bounds}.


\subsection*{Acknowledgments} The author thanks Andy Raich for discussions related to this work, Spencer Dowdall for explaining his work with Kapovich and Leininger, and the referee for helpful suggestions.


\section{Topological representatives of outer automorphisms}
\label{sec:graph}

In this section, we introduce notation and collect facts and theorems about topological representatives of outer automorphisms of free groups.


\subsection{Graphs} By \emph{graph} we mean a 1--dimensional CW--complex.  If $\Gamma$ is a graph, we let $\sfV(\Gamma)$ denote the set of vertices (0-cells) and $\sfE(\Gamma)$ denote the set of edges (1-cells).  As edges are 1--cells, they are oriented; denote the endpoints of $e \in \sfE(\Gamma)$ by $\bd_{0}(e)$ (initial vertex) and $\bd_{1}(e)$ (terminal vertex).  The same edge with reversed orientation is denoted $e\inv$.  

Graphs will always be assumed to be connected and not to have vertices of valence 1.

An \emph{edge-path} is the image of a cellular map $p \from [0,1] \to \Gamma$ for which there exists a partition $0 = x_{0} < \cdots < x_{k} = 1$ such that $p\big|_{[x_{i-1},x_{i}]}$ is an orientation preserving homeomorphism onto an edge $e_{i}^{\varepsilon_{i}}$ where $\varepsilon_{i} \in \{ 1, -1\}$.  As such, we write $p([0,1]) = \prod_{i=1}^{k} e_{i}^{\varepsilon_{i}}$.  An edge-path $p([0,1]) = \prod_{i=1}^{k} e_{i}^{\varepsilon_{i}}$ is \emph{reduced} if for all $1 \leq i < k$, we have $e_{i}^{\varepsilon_{i}} \neq e_{i+1}^{-\varepsilon_{i+1}}$, i.e., $p$ is locally injective.  An edge-path is homotopic rel $\{0,1\}$ to a unique reduced edge-path.  


\subsection{Morphisms} A \emph{morphism} of graphs $f \from \Gamma \to \Gamma'$ is a cellular map such that $f$ linearly expands (with respect to some metrics) each edge of $\Gamma$ across an edge-path of $\Gamma'$.  A morphism is homotopic rel $\sfV(\Gamma)$ to a unique morphism so that the image of every edge is a reduced edge-path.  We will implicitly make the assumption throughout that morphisms are so reduced.

Let $\Gamma$ be a finite graph and fix an ordering of the edges $\sfE(\Gamma) = \{e_{1},\ldots,e_{n} \}$.  Given a morphism $f \from \Gamma \to \Gamma$, the \emph{transition matrix} $M(f)$ is the $n \times n$ matrix with non-negative integer entries $m_{i, \, j}$ defined as the number of times $e_{j}^{\pm 1}$ appears in the edge-path $f(e_{i})$.  Notice that by our assumption that morphisms are homotoped so that the image of every edge is a reduced edge-path, in general $M(f^{k})$ need not equal $M(f)^{k}$.  There is a setting in which a version of this can hold, which is the content of the next section.


\subsection{Relative train-track maps}\label{sec:rtt} Let $\Gamma$ be a graph, $v \in \sfV(\Gamma)$ and suppose there is an identification $\pi_{1}(\Gamma,v) \cong \FF$.  Then a homotopy equivalence $f \from \Gamma \to \Gamma$ specifies an outer automorphism of $\FF$.  Conversely, every outer automorphism $\phi \in \Out(\FF)$ can be represented by a morphism that is homotopy equivalence of $\Gamma$.  A \emph{relative train-track} map is a particular topological representative of a given outer automorphism that is useful for studying its dynamical properties.  Such maps were defined and shown to exist by Bestvina--Handel~\cite[Theorem~5.12]{ar:BH92} and have been essential in many subsequent results and discoveries for $\Out(\FF)$.  We will not recall their complete definition here, only the relevant aspects needed for this article.

Suppose $f \from \Gamma \to \Gamma$ is a morphism that respects a filtration by subgraphs $\emptyset = \Gamma_{0} \subset \Gamma_{1} \subset \cdots \subset \Gamma_{S} = \Gamma$ in the sense that $f(\Gamma_{s}) \subseteq \Gamma_{s}$ for all $0 \leq s \leq S$.  The $s$th \emph{stratum} $H_{s}$ is the closure of $\Gamma_{s} - \Gamma_{s-1}$.  We denote the number of edges in $H_{s}$ by $n_{s}$.   We order the set of edges $\sfE(\Gamma) = \{ e_{1},\ldots,e_{n} \}$ such that if $e_{i} \in H_{i'}$, $e_{j} \in H_{j'}$ and $i' < j'$ then $i < j$, i.e., edges on lower strata are lower in the order.  For each $1 \leq s \leq S$, we let $i_{s}$ denote the smallest index such that $e_{i_{s}} \in \sfE(H_{s})$ and $M(f)_{s}$ the $n_{s} \times n_{s}$--submatrix of $M(f)$ with $i_{s} \leq i,j \leq i_{s} - 1 + n_{s}$; thus $M(f)_s$ is the transition matrix of the induced map $\Gamma_s/\Gamma_{s-1} \to \Gamma_s/\Gamma_{s-1}$\footnote{For this to be true, do not homotope the induced map to eliminate nonreduced edge-paths.}.  With these conventions, $M(f)$ is lower block triangular with blocks $M(f)_{s}$ along the diagonal.  

After possibly increasing the size of the filtration, we can assume that each $M(f)_{s}$ is either the zero matrix or is \emph{irreducible}: for each $1 \leq i,j \leq n_{s}$ there is a $k \in \NN$ such that the $ij$th entry of $\bigl(M(f)_{s}\bigr)^{k}$ is not zero.  

The key property of relative train-track maps that we need for the sequel is the following.

\begin{lemma}[{\cite[Lemma~5.8]{ar:BH92}}]
\label{lem:rtt}
Suppose $f \from \Gamma \to \Gamma$ is a relative train-track map with respect to the filtration $\emptyset = \Gamma_{0} \subset \Gamma_{1} \subset \cdots \subset \Gamma_{S} = \Gamma$.  Then for all $1 \leq s \leq S$ and $k \in \NN$, we have $M(f^{k})_{s} = \bigl(M(f)_{s}\bigr)^{k}$.
\end{lemma}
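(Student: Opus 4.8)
The plan is to prove this by induction on $k$, using the fact that the transition matrix of a morphism is not multiplicative in general only because of the interaction of cancellation when edge-paths are concatenated and then reduced. The key observation is that within a single stratum $H_s$, the relative train-track condition — specifically the requirement recalled informally as Lemma~5.8 of Bestvina--Handel — is precisely designed to rule out exactly the cancellation that would break multiplicativity. Concretely, for an exponentially growing stratum, $f$ restricted to $H_s$ has the property that $f$ maps every edge of $H_s$ to a path that begins and ends with edges of $H_s$, and no cancellation occurs in $H_s$ when one iterates. For a zero stratum, there is nothing in $H_s$ to iterate and the statement is about a submatrix that is $0$ in all relevant powers.

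First I would set up notation carefully: write $M = M(f)$, which by the block structure is lower block triangular with diagonal blocks $M_s = M(f)_s$. Since $M^k$ is again lower block triangular with diagonal blocks $(M_s)^k$, the content of the lemma is really just that the $s$th diagonal block of $M(f^k)$ — which a priori differs from the $s$th diagonal block of $M(f)^k$ because of reductions in the concatenation defining $f^k$ — actually coincides with $(M_s)^k$. So it suffices to track, for each edge $e$ of $H_s$, the number of times each edge $e'$ of $H_s$ appears in the \emph{reduced} edge-path $f^k(e)$.

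Next I would carry out the induction. For $k=1$ there is nothing to prove. For the inductive step, write $f^{k+1}(e) = f(f^k(e))$. By the inductive hypothesis, the occurrences of $H_s$-edges in $f^k(e)$ are recorded by $(M_s)^k$. Now I apply $f$ edge-by-edge to the reduced path $f^k(e)$. The heart of the matter is to show that when we apply $f$ to consecutive edges of $f^k(e)$ and concatenate, \emph{no cancellation involving edges of $H_s$ occurs}, so that the $H_s$-portion of the reduced path $f^{k+1}(e)$ is obtained by simply summing the $H_s$-contributions of $f$ applied to each edge of $f^k(e)$. For the two consecutive edges both lying in $H_s$, this is exactly the relative train-track property: no cancellation between $f$-images of adjacent $H_s$-edges. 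For a consecutive pair where one edge is in a lower stratum $\Gamma_{s-1}$, the $f$-image of that edge lies entirely in $\Gamma_{s-1}$, hence contributes no $H_s$-edges, so even if cancellation occurs at such a junction it can only eat into the lower-stratum material (or into at most the extreme ends of the adjacent $H_s$-image, but the relative train-track structure — "$f(H_s$-edge$)$ begins and ends with $H_s$-edges and is $r$-legal" — prevents an $H_s$-edge from being cancelled there as well). Summing these contributions gives that the $H_s$-block of $M(f^{k+1})$ equals $M_s \cdot (M_s)^k = (M_s)^{k+1}$, completing the induction.

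The main obstacle — and the step that requires genuine care rather than bookkeeping — is the cancellation analysis at junctions between an $H_s$-edge and a lower-stratum edge in the path $f^k(e)$. One must invoke the full strength of the relative train-track axioms (the notion of a turn being legal/illegal, and that $f$ maps legal turns in $H_s$ to legal turns) to be certain that no $H_s$-edge is destroyed by reduction at such a junction; this is precisely where the cited Lemma~5.8 of Bestvina--Handel does the work, so in practice I would simply quote it in the form stated and note that the block-triangular observations above reduce our lemma to that statement applied to each stratum in turn.
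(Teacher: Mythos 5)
The paper does not actually prove this lemma; it is stated with a citation to Bestvina--Handel \cite[Lemma~5.8]{ar:BH92} and no further argument is given. So the ``paper's own proof'' is simply an appeal to that reference, and your closing fallback (``in practice I would simply quote it in the form stated'') coincides with what the paper does.

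That said, your attempted sketch of the underlying argument has some real issues worth flagging. First, there is a circularity: you twice describe Lemma~5.8 itself as ``the relative train-track property'' or ``requirement'' that rules out cancellation, and then propose to quote that same lemma to finish the proof. But Lemma~5.8 is the \emph{conclusion} being established, not an axiom; the relative train-track \emph{axioms} are the three conditions (RTT-(i): for $e$ in $H_s$, $Df$ maps directions of $H_s$ to directions of $H_s$; RTT-(ii): for a nontrivial path $\alpha \subset \Gamma_{s-1}$ with endpoints in $H_s \cap \Gamma_{s-1}$, the reduced image $f_\#(\alpha)$ is nontrivial; RTT-(iii): $f$ maps $s$-legal paths to $s$-legal paths), and it is these that must be invoked. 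Second, the claim ``no cancellation between $f$-images of adjacent $H_s$-edges'' is not an RTT axiom and is false in general: two adjacent $H_s$-edges can form an illegal turn, and then cancellation would occur. What RTT-(iii) actually gives you is the preservation of \emph{legal} paths, so the induction must carry the stronger invariant that the $H_s$-portion of $f^k(e)$ is $s$-legal (contains no illegal turns in $H_s$) --- a single edge is trivially $s$-legal, and RTT-(iii) propagates this forward. That legality invariant, not a blanket ``no cancellation'' statement, is what keeps the $H_s$-count multiplicative. Third, the junction between $H_s$-material and lower-stratum material needs RTT-(ii) and not just RTT-(i): without RTT-(ii), a lower-stratum connecting path inside $f^k(e)$ could collapse under $f$, bringing two $H_s$-subpaths into contact and producing new cancellation. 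Your argument mentions RTT-(i) but does not invoke RTT-(ii), so as written it does not close that gap.
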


In other words, if $e \in \sfE(H_{s})$, then the edges in $H_{s}$ crossed by $f^{k}(e)$ are not canceled when $f^{k}(e)$ is homotoped rel endpoints to a reduced edge-path.

To each irreducible $M(f)_{s}$ is associated a Perron--Frobenius eigenvalue $\lambda(f)_{s} \geq 1$.  We set $\PF(f) = \{ s \mid M(f)_{s} \mbox{ is irreducible and } \lambda(f)_{s} > 1 \}$; these are the indices of the \emph{exponentially growing stratum}.  The set of eigenvalues $\{\lambda(f)_{s} \mid s \in \PF(f) \}$ only depends on the outer automorphism represented by $f$.  

If $f \from \Gamma \to \Gamma$ is a relative train-track map with a single stratum, the adjective ``relative'' is replaced by ``irreducible.''  This happens in particular if $\phi \in \Out(\FF)$ is \emph{irreducible}.  This means that $\phi$ does not cyclically permute the conjugacy classes of free factors $\FF_{i}$, $1 \leq i \leq k$ where $\FF = \FF_{1} \ast \cdots \ast \FF_{k} \ast \FF'$.  We note that it is possible that a reducible $\phi \in \Out(\FF)$ can be represented by an irreducible train-track map and that powers of irreducible outer automorphisms need not be irreducible.  


\section{\texorpdfstring{$\ell^{2}$}{l\texttwosuperior}--torsion}
\label{sec:torsion}

For this section, let $G$ be a countable group.  We briefly introduce the necessary $\ell^{2}$--theory needed for the remainder of this article.  There are excellent surveys on $\ell^{2}$--homology of discrete groups by Eckmann~\cite{ar:Eckmann00} and L\"uck~\cite{col:Luck02}.  Additionally, L\"uck's book~\cite{bk:Luck02} is a comprehensive reference on the subject and much of the material below is taken from this source.  One may safely skim this section on a first read and refer back when various definitions and theorems are used in later sections.


\subsection{Hilbert spaces, operators and norms}  The Hilbert space of square-summable functions $\xi \from G \to \CC$ is denoted $\ell^{2}(G)$.  The inner product on $\ell^{2}(G)$ is given by:
\[ \I{\xi_{1},\xi_{2}} = \sum_{g \in G} \xi_{1}(g)\overline{\xi_{2}(g)} \]  
The associated $\ell^{2}$--norm is denoted $\| \xi \| = \I{\xi,\xi}^{1/2}$.  The dense subspace of finitely supported functions is isomorphic (as a vector space) to the group algebra $\CC[G]$.  As such, we consider $g \in G$ as the element of $\CC[G] \subset \ell^{2}(G)$ that is the unit function that takes value 1 on $g$ and 0 elsewhere.  

Recall that if $T \from U \to V$ is an operator between Hilbert spaces, then the \emph{operator norm} is defined as $\opnorm{T} = \sup_{\| x \|_{U} = 1} \| T(x) \|_{V}$.  The operator $T$ is \emph{bounded} if $\opnorm{T} < \infty$.

The group $G$ acts linearly and isometrically on both the left and on the right of $\ell^{2}(G)$ by:
\[ (g \cdot \xi)(h) = \xi(g\inv h) \mbox{ and } (\xi \cdot g)(h) = \xi(hg\inv) \]
By linearity, these extend to left and right actions of $\CC[G]$ by bounded operators.  In fact, if $\xi \in \ell^{2}(G)$ and $a \in \CC[G]$, we have $\| \xi \cdot a \| \leq \| \xi \| \cdot \abs{a}$  where $\abs{\param} \from \CC[G] \to \CC$ is the $\ell^{1}$--norm.  
  
The adjoint of the operator associated to $a \in \CC[G]$ is the operator associated to the \emph{conjugate} $\overline{a} \in \CC[G]$, where for $\xi \in \ell^{2}(G)$ we define its conjugate by the formula $\overline{\xi}(g) = \overline{\xi(g\inv)}$. 

The \emph{trace} of an operator $T \from \ell^{2}(G) \to \ell^{2}(G)$ is $\Tr_{G}(T) = \I{T(1),1}$ where $1 \in G$ is the identity element.  This notion extends to a matrix of operators $T = [ T_{i, \, j} ] \from \bigl( \ell^{2}(G) \bigr)^{n} \to \bigl( \ell^{2}(G) \bigr)^{n}$ in the usual way, that is $\Tr_{G}(T) = \sum_{i=1}^{n} \Tr_{G}(T_{i, \, i})$.

For Theorem~\ref{th:bound}, we need an estimate on the operator norm of the operator $T$ defined on $\bigl(\ell^{2}(G)\bigr)^{n}$ by right-multiplication with a matrix $A = [a_{i, \, j}] \in \Mat_{n}(\CC[G])$.  We remark for future reference that the adjoint of such an operator is the operator defined by right-multiplication with $A^{*} = \left[\overline{a_{j, \, i}}\right]$.  
     

For $A = [ a_{i, \, j} ] \in \Mat_{n}(\CC[G])$, we define $A_{\ell^{1}} \in \Mat_{n}(\CC)$ by $A_{\ell^{1}} = \left[ | a_{i, \, j} | \right]$. 

\begin{proposition}
\label{prop:operator bound}
Suppose $A \in \Mat_{n}(\CC[G])$.  Let $T \from \bigl(\ell^{2}(G)\bigr)^{n} \to \bigl(\ell^{2}(G)\bigr)^{n}$ be the operator induced by right-multiplication by $A$ and $T_{\ell^{1}} \from \CC^{n} \to \CC^{n}$ the operator induced by right-multiplication by $A_{\ell^{1}}$.  Then $\opnorm{T} \leq \opnorm{T_{\ell^{1}}}$.
\end{proposition}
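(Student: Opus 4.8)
The plan is to bound $\opnorm{T(\xi)}$ directly for an arbitrary $\xi = (\xi_{1},\ldots,\xi_{n}) \in \bigl(\ell^{2}(G)\bigr)^{n}$, reducing the estimate to the scalar operator $T_{\ell^{1}}$ applied to the vector of norms $\bigl(\|\xi_{1}\|,\ldots,\|\xi_{n}\|\bigr) \in \CC^{n}$. No deep input is needed beyond the coordinatewise inequality $\| \xi \cdot a \| \leq \| \xi \| \cdot \abs{a}$ recorded earlier in this section and the triangle inequality in $\ell^{2}(G)$.

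First I would unwind the definitions: since $T$ is right-multiplication by $A = [a_{i, \, j}]$, the $j$th coordinate of $T(\xi)$ is $\sum_{i=1}^{n} \xi_{i} \cdot a_{i, \, j}$. Applying the triangle inequality in $\ell^{2}(G)$ and then the bound $\| \xi_{i} \cdot a_{i, \, j} \| \leq \| \xi_{i} \| \cdot \abs{a_{i, \, j}}$ gives, for each $j$,
\[ \bigl\| \bigl(T(\xi)\bigr)_{j} \bigr\| \;\leq\; \sum_{i=1}^{n} \| \xi_{i} \| \cdot \abs{a_{i, \, j}}. \]

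Next, set $v = \bigl(\| \xi_{1} \|,\ldots,\| \xi_{n} \|\bigr) \in \CC^{n}$, which has non-negative real entries and Euclidean norm $\| v \| = \| \xi \|$. The right-hand side of the displayed inequality is exactly the $j$th coordinate of $v A_{\ell^{1}} = T_{\ell^{1}}(v)$, since $A_{\ell^{1}} = \left[ \abs{a_{i, \, j}} \right]$. Summing the squares over $j$ therefore yields
\[ \| T(\xi) \|^{2} \;=\; \sum_{j=1}^{n} \bigl\| \bigl(T(\xi)\bigr)_{j} \bigr\|^{2} \;\leq\; \sum_{j=1}^{n} \bigl| \bigl(T_{\ell^{1}}(v)\bigr)_{j} \bigr|^{2} \;=\; \| T_{\ell^{1}}(v) \|^{2} \;\leq\; \opnorm{T_{\ell^{1}}}^{2} \, \| v \|^{2} \;=\; \opnorm{T_{\ell^{1}}}^{2} \, \| \xi \|^{2}. \]
Taking square roots and passing to the supremum over unit vectors $\xi \in \bigl(\ell^{2}(G)\bigr)^{n}$ gives $\opnorm{T} \leq \opnorm{T_{\ell^{1}}}$.

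I do not expect any real obstacle here; the only points that require a little care are that the triangle inequality must be applied inside each coordinate of $\bigl(\ell^{2}(G)\bigr)^{n}$ \emph{before} replacing the $\ell^{2}(G)$-vectors by the scalar vector $v$, and that one uses only the coordinatewise $\ell^{1}$-estimate $\| \xi \cdot a \| \leq \| \xi \| \abs{a}$ rather than any finer structure of $G$. The computation is insensitive to whether $T_{\ell^{1}}$ is regarded as acting on $\CC^{n}$ or on $\RR^{n}$, as $v$ has non-negative real entries in either case.
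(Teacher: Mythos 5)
Your proof is correct and takes essentially the same route as the paper: bound each coordinate of $T(\xi)$ via the triangle inequality and the estimate $\|\xi \cdot a\| \leq \|\xi\|\,\abs{a}$, observe that this reproduces $T_{\ell^{1}}$ applied to the vector of norms $v = (\|\xi_{1}\|,\ldots,\|\xi_{n}\|)$, and conclude since $\|v\| = \|\xi\|$. In fact your version is slightly cleaner: the paper's displayed chain passes through an intermediate inequality of the form $\left\|\sum_{j}\eta_{j}\right\|^{2} \leq \sum_{j}\|\eta_{j}\|^{2}$, which is not valid on its own (only the composite inequality ending at $\bigl(\sum_{j}\|\xi_{j}\|\,\abs{a_{j,i}}\bigr)^{2}$ is), whereas you apply the triangle inequality directly inside each coordinate before squaring, avoiding that pitfall.
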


\begin{proof}
Fix an $n$-tuple $\boldsymbol{\xi} = (\xi_{1},\ldots,\xi_{n}) \in \bigl(\ell^{2}(G)\bigr)^{n}$.  We compute:
\begin{align*}
\| T(\boldsymbol{\xi}) \|^{2} & = \bigl\| (\xi_{1},\ldots,\xi_{n})A \bigr\|^{2}  = \sum_{i=1}^{n} \left\| \sum_{j=1}^{n} \xi_{j} \cdot a_{j, \, i}\right\|^{2} \\
& \leq \sum_{i=1}^{n} \sum_{j=1}^{n} \| \xi_{j} \cdot a_{j, \, i} \|^{2} \leq \sum_{i=1}^{n} \sum_{j=1}^{n} (\| \xi_{j} \| \cdot |a_{j, \, i}|)^{2} \\
&\leq \sum_{i=1}^{n} \left( \sum_{j=1}^{n} \| \xi_{j} \| \cdot |a_{j, \, i}| \right)^{2}  = \bigl\| (\| \xi_{1} \|, \ldots, \| \xi_{n} \|)A_{\ell^{1}} \bigr\|^{2} \\
& \leq \bigl\| (\| \xi_{1} \|, \ldots, \| \xi_{n} \|) \bigr\|^{2} \opnorm{T_{\ell^{1}}}^{2}  =\bigl\| \boldsymbol{\xi} \bigr\|^{2}  \opnorm{T_{\ell^{1}}}^{2} 
\end{align*}
The result follows.
\end{proof}

We recall a standard fact about matrix norms on $\Mat_{n}(\CC)$.  We specifically use the norm $\opnorm{A}$ defined as the operator norm of $T \from \CC^{n} \to \CC^{n}$ induced by right-multiplication by $A$.  For any $A \in \Mat_{n}(\CC)$ we have:
\begin{equation}
\label{eq:matrix norm}
\lim_{k \to \infty} \opnorm{A^{k}}^{1/k} = r(A)
\end{equation}
where $r(A)$ is the spectral radius of $A$, i.e., largest absolute value of an eigenvalue of $A$.  See for instance~\cite[Theorem~VI.6]{bk:RS80}.  This equation easily implies the following limit which is needed for Theorem~\ref{th:bound}.   

\begin{lemma}
\label{lem:I + A}
Suppose that $A \in \Mat_{n}(\CC)$ with $r(A) > 1$ and let $I$ denote the identity matrix in $\Mat_{n}(\CC)$.  Then:
\begin{equation*}
\lim_{k \to \infty} \opnorm{I + A^{k}}^{1/k} = r(A).
\end{equation*}
\end{lemma}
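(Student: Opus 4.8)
The plan is to sandwich $\opnorm{I+A^k}^{1/k}$ between two quantities that both converge to $r(A)$, using the triangle inequality for the operator norm in one direction and a reverse triangle inequality in the other. Throughout, write $r = r(A) > 1$ and recall from \eqref{eq:matrix norm} that $\opnorm{A^k}^{1/k} \to r$.

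For the upper bound, the triangle inequality gives $\opnorm{I + A^k} \leq \opnorm{I} + \opnorm{A^k} = 1 + \opnorm{A^k}$. Since $r > 1$, for all large $k$ we have $\opnorm{A^k} \geq 1$ (as $\opnorm{A^k}^{1/k} \to r > 1$), so $1 + \opnorm{A^k} \leq 2\opnorm{A^k}$, whence $\opnorm{I+A^k}^{1/k} \leq 2^{1/k}\opnorm{A^k}^{1/k}$. Since $2^{1/k} \to 1$, I would conclude $\limsup_{k\to\infty} \opnorm{I + A^k}^{1/k} \leq r$.

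For the lower bound, use $\opnorm{A^k} \leq \opnorm{I} + \opnorm{I + A^k} = 1 + \opnorm{I + A^k}$, i.e. $\opnorm{I + A^k} \geq \opnorm{A^k} - 1$. Again for large $k$ the right-hand side is positive and in fact $\geq \tfrac12 \opnorm{A^k}$, so $\opnorm{I + A^k}^{1/k} \geq (1/2)^{1/k}\opnorm{A^k}^{1/k}$, and taking $\liminf$ gives $\liminf_{k\to\infty}\opnorm{I+A^k}^{1/k} \geq r$. Combining the two bounds yields the claimed limit.

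The only point requiring a little care — and the nearest thing to an obstacle — is justifying that $\opnorm{A^k} \to \infty$ (equivalently that $\opnorm{A^k} \geq 1$, and eventually $\geq 2$, for large $k$), which is where the hypothesis $r(A) > 1$ is genuinely used; this is immediate from \eqref{eq:matrix norm} since $\opnorm{A^k}^{1/k} \to r > 1$ forces $\opnorm{A^k} \to \infty$. Everything else is a routine squeeze using $c^{1/k} \to 1$ for any fixed constant $c > 0$. One could alternatively avoid constants entirely by writing, for large $k$, $\opnorm{A^k} - 1 \leq \opnorm{I + A^k} \leq \opnorm{A^k} + 1$ and noting $(\opnorm{A^k} \pm 1)^{1/k} \to r$ directly because $\log(\opnorm{A^k}\pm 1) = \log\opnorm{A^k} + o(1)$ and $\tfrac1k \log \opnorm{A^k} \to \log r$; I would present whichever phrasing is shorter.
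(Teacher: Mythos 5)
Your argument is correct and is essentially the same as the paper's: both start from the triangle-inequality sandwich $\opnorm{A^k}-1 \leq \opnorm{I+A^k} \leq \opnorm{A^k}+1$ and exploit that $r(A)>1$ forces $\opnorm{A^k}\to\infty$, so the $\pm 1$ is negligible after taking $k$th roots. The paper divides through by $\opnorm{A^k}$ where you use constant factors of $2$ and $\tfrac12$, but this is only a cosmetic difference.
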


\begin{proof}
By the triangle inequality we have:
\[\opnorm{A^{k}} - 1 \leq \opnorm{I + A^{k}} \leq 1 + \opnorm{ A^{k}}.\]  Dividing both sides by $\opnorm{A^{k}}$ and taking the $k^{\rm th}$ root we find:
\[ \left(1 - \frac{1}{\opnorm{A^{k}}}\right)^{1/k} \leq \left( \frac{\opnorm{I + A^{k}}}{\opnorm{ A^{k}}} \right)^{1/k} \leq \left(1 + \frac{1}{\opnorm{A^{k}}}\right)^{1/k} \]
As $r(A) > 1$, we have $\opnorm{A^{k}} \to \infty$ and hence 
\[ \lim_{k \to \infty} \opnorm{I + A^{k}}^{1/k} = \lim_{k \to \infty} \opnorm{A^{k}}^{1/k} = r(A) \]
using \eqref{eq:matrix norm}.
\end{proof}


\subsection{\texorpdfstring{$\ell^{2}$}{l\texttwosuperior}--homology}\label{sec:homology}

A \emph{Hilbert--$G$--module}\footnote{This definition is sometimes called a \emph{finitely generated} Hilbert--$G$--module.} is a Hilbert space $V$ equipped with an action of $G$ by linear isometries for which there exists an $G$--equivariant isometric linear embedding $V \to \bigl(\ell^{2}(G)\bigr)^{n}$ for some $n$, with regard to the left $G$--action on $\bigl(\ell^{2}(G)\bigr)^{n}$.  A \emph{map} of Hilbert--$G$--modules $T \from U \to V$ is a bounded $G$--equivariant operator.  Given a chain complex $C_{*}^{(2)} = \{ c_{p} \from C_{p}^{(2)} \to C_{p-1}^{(2)} \}_{p \in \ZZ}$ of Hilbert--$G$--modules, the \emph{$\ell^{2}$--homology} is defined by:
\[ H^{(2)}_{p}(C_{*}^{(2)}) = \raisebox{5pt}{$\ker c_{p}$} \big/ \raisebox{-5pt}{$\clos(\img c_{p+1})$}. \]
Taking the quotient by the closure ensures that the resulting object is a Hilbert--$G$--module.

Suppose $X$ is a CW--complex equipped with an action of $G$ that freely permutes the cells such that there are only finitely many orbits of cells in each dimension.  Let $C_{*}(X)$ be the usual cellular chain complex of $X$.  Hence the chain groups are free $\ZZ[G]$--modules of finite rank.  Then $C_{*}^{(2)}(X) = \ell^{2}(G) \otimes_{\ZZ[G]} C_{*}(X)$ is a chain complex of Hilbert--$G$--modules and we define: 
\[H_{p}^{(2)}(X) = H_{p}^{(2)}(C_{*}^{(2)}(X)).\]
A $G$--equivariant homotopy equivalence $f \from X \to Y$ of CW--complexes equipped with free $G$--actions as above induces an isomorphism $H_{p}^{(2)}(X) \to H_{p}^{(2)}(Y)$~\cite[Theorem~1.35(1)]{bk:Luck02}.  Hence if there exists an classifying space for $G$, $BG$, that has finitely many cells in each dimension we are justified in defining:
\[ H_{p}^{(2)}(G) = H_{p}^{(2)}(EG). \]

We will need the following special case of a theorem of L\"uck.

\begin{theorem}[{\cite[Theorem~1.39]{bk:Luck02}}]\label{th:vanish}
Let $\Phi \from \FF \to \FF$ be an automorphism.  Then for all $p \geq 0$:
\[ H^{(2)}_{p}(\FF \rtimes_{\Phi} \ZZ) = 0.  \]
\end{theorem}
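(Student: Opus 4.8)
The statement to prove is that $H^{(2)}_p(\FF \rtimes_\Phi \ZZ) = 0$ for all $p \geq 0$, where $\FF$ is finitely generated free. The plan is to build an explicit, small classifying space and analyze its $\ell^2$--chain complex directly. Let $\FF = \FF_n$ with free basis $x_1, \dots, x_n$, so that $G := \FF_n \rtimes_\Phi \ZZ$ has presentation $\I{x_1, \dots, x_n, t \mid t^{-1} x_i t = \Phi(x_i)}$. The associated presentation $2$--complex is aspherical: indeed $\FF_n$ has a $1$--dimensional $K(\FF_n, 1)$ (a wedge of $n$ circles, or a graph $\Gamma$), and the mapping torus $X$ of the homotopy equivalence $f \from \Gamma \to \Gamma$ representing $\Phi$ is a $2$--complex with $\pi_1(X) = G$ and universal cover that deformation retracts appropriately — more precisely, $X$ fibers over $S^1$ with fiber $\Gamma$, and since $\Gamma$ is aspherical and $S^1$ is aspherical, the long exact sequence of homotopy groups shows $X$ is aspherical. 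So $X$ is a finite $K(G,1)$ with cells only in dimensions $0, 1, 2$, and $H^{(2)}_p(G) = H^{(2)}_p(X)$ vanishes automatically for $p \geq 3$.

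First I would record the cellular chain complex of $\widetilde X$ as a complex of free $\ZZ[G]$--modules. Taking the mapping torus structure, $X$ has one $0$--cell, $n+1$ one--cells (the $n$ loops of $\Gamma$ and the stable letter $t$), and $n$ two--cells (one for each relation $t^{-1} x_i t \Phi(x_i)^{-1}$). Thus the $\ell^2$--chain complex is
\[
0 \to \bigl(\ell^2(G)\bigr)^{n} \xrightarrow{\ c_2\ } \bigl(\ell^2(G)\bigr)^{n+1} \xrightarrow{\ c_1\ } \ell^2(G) \to 0,
\]
with the boundary maps given, via Fox calculus, by right-multiplication by matrices over $\ZZ[G]$. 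The key structural feature is that the $n \times (n+1)$ matrix for $c_2$ has the block form $\bigl[\, (t - J) \mid v \,\bigr]$ (up to signs and ordering), where $J \in \Mat_n(\ZZ[\FF])$ is the Jacobian (Fox derivative) matrix of $\Phi$ and $v$ is the column recording $\partial/\partial t$ of the relators; crucially $t - J$, thought of as acting on the $\FF$--part, is invertible because $J$ is invertible over $\ZZ[\FF]$ (as $\Phi$ is an automorphism) — this is where the hypothesis that $\Phi$ is an \emph{automorphism}, not just an endomorphism, enters.

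The main computation is to show $H^{(2)}_p = 0$ in degrees $0, 1, 2$. For $p = 0$: $H^{(2)}_0(G) = 0$ because $G$ is infinite (it surjects onto $\ZZ$), by the standard fact that $H^{(2)}_0$ of a group vanishes iff the group is infinite. For $p = 2$: I would show $c_2$ is injective with closed image; injectivity follows since the $(t-J)$--block already determines $\boldsymbol\xi$ — if $\boldsymbol\xi \cdot (t-J) = 0$ then, using that multiplication by $t$ is a bijective isometry and $J$ is "contracting" on the fiber coordinate in a suitable sense (or more robustly, using that the Fox matrix of an automorphism over $\ZZ[\FF]$ together with the $t$--shift has trivial kernel on $\ell^2$ by a Mayer--Vietoris / HNN argument), one gets $\boldsymbol\xi = 0$. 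For $p = 1$: this is the heart of the matter — one must show $\ker c_1 = \clos(\img c_2)$. Here I would use the HNN / semidirect-product structure directly: $G$ acts on the Bass--Serre tree of the $\ZZ$--quotient with vertex and edge stabilizers conjugate to $\FF$, giving a Mayer--Vietoris sequence in $\ell^2$--homology
\[
\cdots \to H^{(2)}_p(\FF) \xrightarrow{\ \id - \Phi_*\ } H^{(2)}_p(\FF) \to H^{(2)}_p(G) \to H^{(2)}_{p-1}(\FF) \to \cdots
\]
and since $\FF$ is free it has a $1$--dimensional classifying space with $H^{(2)}_0(\FF) = 0$ (as $\FF$ is infinite) and $H^{(2)}_1(\FF)$ a Hilbert--$\FF$--module — but the relevant input is that the Euler characteristic/$L^2$--Betti numbers of $\FF$ are $b^{(2)}_0 = 0$ and $b^{(2)}_1 = n-1$, and the Wang sequence forces all $H^{(2)}_p(G)$ to vanish once one checks the connecting map is an \emph{isomorphism} of Hilbert modules in the one nonzero degree — which holds because $\Phi_*$ is induced by an isometric automorphism and $\id - \Phi_*$ is a weak isomorphism of $L^2$--modules (dimensions match and it is injective since $\FF \rtimes_\Phi \ZZ$ contains no nontrivial amenable normal subgroup forcing fixed vectors). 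I expect the $p=1$ step — establishing that $\img c_2$ is already \emph{closed} and exhausts $\ker c_1$, equivalently that the middle $L^2$--Betti number vanishes — to be the main obstacle, since weak exactness of $\ell^2$--complexes is delicate; the cleanest route is to invoke L\"uck's Wang-sequence machinery for mapping tori rather than to estimate spectral density functions by hand.
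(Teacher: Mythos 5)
The paper does not prove this statement; it is cited verbatim from L\"uck's book (\cite[Theorem~1.39]{bk:Luck02}), and in fact Theorem~\ref{th:vanish} is used later (in the proof of Theorem~\ref{th:det-splitting}) precisely to establish the injectivity of $I - L_f$ that you are trying to prove ``by hand.'' L\"uck's actual argument is quite different from yours and far slicker: for each $n \geq 1$ the mapping torus $T_{f^n}$ is an $n$-sheeted cyclic cover of $T_f$, so by multiplicativity of $L^2$--Betti numbers $b^{(2)}_p(T_{f^n}) = n \cdot b^{(2)}_p(T_f)$; but $T_{f^n}$ also carries a mapping-torus CW structure whose number of $p$-cells is bounded independently of $n$, and $b^{(2)}_p$ is at most the number of $p$-cells. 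Letting $n \to \infty$ forces $b^{(2)}_p(T_f) = 0$. No chain-level estimate, Wang sequence, or weak-isomorphism claim is needed.

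Your proposed route has genuine gaps at exactly the two points you yourself flag as ``the heart of the matter.'' For $p=2$ you claim injectivity of the $n\times n$ block because ``$J$ is contracting on the fiber coordinate in a suitable sense'' --- this is false in general: when $\Phi$ is exponentially growing, the $\ell^1$-norms of the entries of $J_1(f^k)$ grow like $\lambda^k$, so there is no contraction and the naive estimate $\|\xi \cdot a\| \le \|\xi\|\cdot|a|_{\ell^1}$ goes the wrong way. Your fallback (``by a Mayer--Vietoris / HNN argument'') is circular, since that Mayer--Vietoris vanishing is the statement being proved. For $p=1$ you assert that $\id - \Phi_*$ is a weak isomorphism because ``$\FF \rtimes_\Phi \ZZ$ contains no nontrivial amenable normal subgroup forcing fixed vectors''; that is not a valid argument --- $\FF \rtimes_\Phi \ZZ$ \emph{always} surjects onto the amenable group $\ZZ$, and in any case the kernel of $\id - \Phi_*$ on the induced Hilbert module is not controlled by amenable normal subgroups. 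The Wang-sequence route can in principle be pushed through, but proving the requisite weak-isomorphism statement at the chain level is essentially as hard as the theorem itself; this is why L\"uck's finite-cover argument, which bypasses all spectral analysis, is the standard proof and why the paper simply cites it.
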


This theorem holds more generally for $G \rtimes_{\Phi} \ZZ$ whenever $G$ has an Eilenberg--Maclane space with finitely many cells in each dimension and in a wider setting as well.


\subsection{Fuglede--Kadison determinant}\label{sec:det}

Suppose $T \from U \to V$ is map of Hilbert--$G$--modules and let $\nu_{T} \from [0,\infty) \to [0,\infty)$ be the spectral density function of $T^{*}T$.  We will not make use of the definition of $\nu$ but for reference for the reader state that in the case that $G$ is the trivial group and $T$ is induced by right-multiplication with a matrix $A \in \Mat_{n}(\CC)$ the function $\nu$ is the right-continuous step function with jumps at the square root of each eigenvalue of $AA^{*}$ of size the algebraic multiplicity of the eigenvalue.  See \cite[Section~2.1]{bk:Luck02} for complete details.

The \emph{Fuglede--Kadison determinant} of $T$ is defined by:
\begin{equation}
\label{eq:det} 
\detG(T) = {\rm exp} \int_{0^{+}}^{\infty} \log(\lambda) \, d\nu_{T} 
\end{equation}  
if the integral exists, else $\detG(T) = 0$.  If the context is clear, we will omit $G$ from the notation.  

In the case described above when $G$ is the trivial group, the reader can verify using the description of $\nu_{T}$ that $\det(T) = \sqrt{\det(AA^{*})}$ if $\det(AA^{*}) > 0$.

As for the usual determinant of matrices over $\CC$, the operator norm and dimension can be used to bounded the determinant.

\begin{lemma}[{\cite[Lemma~6.9]{un:Luck}}]\label{lem:det-bound}
Suppose $T \from \bigl(\ell^{2}(G) \bigr)^{m} \to \bigl(\ell^{2}(G) \bigr)^{n}$ is a map of Hilbert--$G$--modules.  Then:
\[ \det(T) \leq \opnorm{T}^{n}. \]
\end{lemma}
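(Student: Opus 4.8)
The plan is to unwind the definition \eqref{eq:det} of the Fuglede--Kadison determinant and bound the resulting integral using two elementary features of the spectral density function $\nu_{T}$ of $T^{*}T$: that the measure $d\nu_{T}$ is supported on a bounded interval governed by $\opnorm{T}$, and that its total mass away from $0$ does not exceed $n$.

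If the integral in \eqref{eq:det} fails to exist then $\det(T) = 0$ by convention and there is nothing to prove, so I would assume it exists and establish two facts about $\nu_{T}$. First, since $\opnorm{T^{*}T} = \opnorm{T}^{2}$, the spectrum of the positive operator $T^{*}T$ lies in $[0,\opnorm{T}^{2}]$, so the spectral projection $E^{T^{*}T}_{\lambda^{2}}$ is the identity for every $\lambda \geq \opnorm{T}$; hence $\nu_{T}$ is constant on $[\opnorm{T},\infty)$, that is, $d\nu_{T}$ is supported on $(0,\opnorm{T}]$. Second, the mass of $d\nu_{T}$ on $(0,\infty)$ equals $\bigl(\lim_{\lambda\to\infty}\nu_{T}(\lambda)\bigr) - \nu_{T}(0)$; because $\ker(T^{*}T) = \ker T$ one has $\nu_{T}(0) = \dim_{G}\ker T$, where $\dim_{G}$ is the von Neumann dimension, while $\lim_{\lambda\to\infty}\nu_{T}(\lambda) = \dim_{G}\bigl(\ell^{2}(G)\bigr)^{m} = m$; the dimension identity $\dim_{G}U = \dim_{G}\ker f + \dim_{G}\overline{\img f}$ for a morphism $f \from U \to V$ of Hilbert--$G$--modules \cite{bk:Luck02} then identifies this mass with $L := \dim_{G}\overline{\img T}$, and $L \leq n$ since $\overline{\img T}$ is a Hilbert--$G$--submodule of $\bigl(\ell^{2}(G)\bigr)^{n}$.

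Granting these two facts the bound is a one-line computation: as $\log$ is increasing, $\log\lambda \leq \log\opnorm{T}$ throughout the support $(0,\opnorm{T}]$ of the nonnegative measure $d\nu_{T}$, so
\[
\log\det(T) \;=\; \int_{0^{+}}^{\infty} \log\lambda\, d\nu_{T} \;\leq\; \log\opnorm{T}\int_{0^{+}}^{\infty} d\nu_{T} \;=\; L\,\log\opnorm{T}.
\]
Since $L \leq n$, when $\opnorm{T} \geq 1$ (the case relevant to the applications in this paper) the right-hand side is at most $n\log\opnorm{T}$, and exponentiating gives $\det(T) \leq \opnorm{T}^{n}$; when $\opnorm{T} < 1$ the same display already yields the sharper bound $\det(T) \leq \opnorm{T}^{L} \leq 1$.

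The step I expect to demand the most care is the second fact above — pinning down the mass of $d\nu_{T}$ on $(0,\infty)$. Because the integral in \eqref{eq:det} runs over $(0,\infty)$, a positive-dimensional kernel of $T^{*}T$ contributes an atom of $\nu_{T}$ at $0$ that must be discarded, and one must verify that what remains is exactly $\dim_{G}\overline{\img T}$; this is precisely where the additivity of the von Neumann dimension for morphisms of Hilbert--$G$--modules is invoked. Everything else — locating the support of $d\nu_{T}$ via $\opnorm{T^{*}T} = \opnorm{T}^{2}$, monotonicity of $\log$, and the submodule inclusion $\overline{\img T} \subseteq \bigl(\ell^{2}(G)\bigr)^{n}$ giving $L \leq n$ — is routine.
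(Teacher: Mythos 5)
Your argument is the standard one behind the cited result (the paper itself gives no proof, deferring to L\"uck), and the core of it is sound: $d\nu_{T}$ is supported in $(0,\opnorm{T}]$ because $\opnorm{T^{*}T}=\opnorm{T}^{2}$, its mass on $(0,\infty)$ is $L=\dim_{G}\overline{\img T}=m-\dim_{G}\ker T\leq n$, and integrating the bound $\log\lambda\leq\log\opnorm{T}$ gives $\log\det(T)\leq L\log\opnorm{T}$. When $\opnorm{T}\geq 1$ this yields $\det(T)\leq\opnorm{T}^{n}$ as required.

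The one place you go astray is the case $\opnorm{T}<1$: there your "sharper bound" $\det(T)\leq\opnorm{T}^{L}\leq 1$ does \emph{not} imply $\det(T)\leq\opnorm{T}^{n}$, since $\opnorm{T}^{L}>\opnorm{T}^{n}$ when $L<n$. In fact the inequality as literally stated fails in that regime: take $G$ trivial, $m=1$, $n=2$, and $T(x)=(cx,0)$ with $0<|c|<1$; then $\det(T)=|c|>|c|^{2}=\opnorm{T}^{n}$. (The zero map is another degenerate case, since $\det(0)=1$ by convention while $\opnorm{0}^{n}=0$.) So the lemma should be read with the implicit hypothesis $\opnorm{T}\geq 1$, or with $\opnorm{T}$ replaced by $\max\{1,\opnorm{T}\}$; this is harmless for the paper, where it is applied only to the operators $I-L_{f,\,s}$, whose norm is at least $1$ because $\langle (I-L_{f,\,s})(\xi),\xi\rangle=\opnorm{\xi}^{2}$ for $\xi$ a standard basis vector (the entries of $tJ_{1}(f)_{s}$ vanish at the identity of $G_{\phi}$). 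Rather than presenting the $\opnorm{T}<1$ case as handled, you should either state the hypothesis $\opnorm{T}\geq 1$ explicitly or record the conclusion as $\det(T)\leq\max\{1,\opnorm{T}\}^{n}$.
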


An important property of the Fuglede--Kadison determinant is that under certain circumstances, it can be computed using block forms.  The special case of this statement we need is the following.  Suppose 
\begin{align*}
T_{1} &\from \bigl(\ell^{2}(G) \bigr)^{m} \to \bigl(\ell^{2}(G) \bigr)^{m}, \\ 
T_{2} &\from \bigl(\ell^{2}(G) \bigr)^{n} \to \bigl(\ell^{2}(G) \bigr)^{n}, \mbox{ and} \\ 
T_{3} &\from \bigl(\ell^{2}(G) \bigr)^{n} \to \bigl(\ell^{2}(G) \bigr)^{m}
\end{align*}
are bounded $G$--equivariant operators where $T_{1}$ and $T_{2}$ are injective.    Then for the operator $T \from \bigl(\ell^{2}(G) \bigr)^{m+n} \to \bigl(\ell^{2}(G) \bigr)^{m+n}$ defined by \[T(\bxi_{1},\bxi_{2}) = \bigl((T_{1}(\bxi_{1}) + T_{3}(\bxi_{2}),T_{2}(\bxi_{2})\bigr),\] we have 
\begin{equation}
\label{eq:det-block}
\det T = \det (T_{1}) \cdot \det(T_{2})
\end{equation}
See \cite[Theorem~3.14(2)]{bk:Luck02}.


\subsection{\texorpdfstring{$\ell^{2}$}{l\texttwosuperior}--torsion of group automorphisms}\label{sec:auto torsion}

The $\ell^{2}$--torsion is an invariant of a chain complex of Hilbert--$G$--modules $C_{*}^{(2)} = \{ c_{p} \from C_{p+1}^{(2)} \to C_{p}^{(2)}\}_{p \in \ZZ}$.  It is defined as the alternating sum of determinants of the operators $c_{p}$.  

\begin{definition}
\label{def:torsion}
Let $C_{*}^{(2)} = \{ c_{p} \from C_{p}^{(2)} \to C_{p-1}^{(2)} \}_{p \in \ZZ}$ be a chain complex of Hilbert--$G$--modules such that $C_{p}^{(2)}$ is nontrivial for only finitely many $p$ and $\det (c_{p}) \neq 0$ for all $p$\footnote{Note that by definition, the determinant of the zero map is 1.}.  The \emph{$\ell^{2}$--torsion of $C_{*}^{(2)}$} is defined by:
\begin{equation}
\label{eq:torsion}
\rho^{(2)}(C_{*}^{(2)}) = -\sum_{p \in \ZZ} (-1)^{p} \log \det (c_{p}).
\end{equation}
\end{definition}

We need the following special case of a sum formula for the $\ell^{2}$--torsion of chain complexes.

\begin{theorem}[{\cite[Theorem~3.35(1)]{bk:Luck02}}]
\label{th:torsion sum}
Suppose that \[0 \to B_{*}^{(2)} \to C_{*}^{(2)} \to D_{*}^{(2)} \to 0\] is an exact sequence of chain complexes of Hilbert--$G$--modules satisfying the assumptions of Definition~\ref{def:torsion}.  Further suppose that for each $p$:
\begin{enumerate}
\item $H_{p}^{(2)}(B_{*}) = H_{p}^{(2)}(C_{*}) = H_{p}^{(2)}(D_{*}) = 0$ and
\item $C_{p}^{(2)} = B_{p}^{(2)} \oplus D_{p}^{(2)}$.
\end{enumerate}
Then $\rho^{(2)}(C_{*}^{(2)}) = \rho^{(2)}(B_{*}^{(2)}) + \rho^{(2)}(D_{*}^{(2)})$.
\end{theorem}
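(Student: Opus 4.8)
The plan is to obtain the additivity from the block-multiplicativity of the Fuglede--Kadison determinant, equation~\eqref{eq:det-block}, following the proof of the cited result~\cite[Theorem~3.35(1)]{bk:Luck02}. First I would observe that, since $B_*^{(2)}$ is a subcomplex of $C_*^{(2)}$, $D_*^{(2)}$ is the quotient complex, and $C_p^{(2)} = B_p^{(2)} \oplus D_p^{(2)}$ for every $p$, each differential $c_p$ is block-triangular with respect to this splitting: writing $b_p$ and $d_p$ for the differentials of $B_*^{(2)}$ and $D_*^{(2)}$, one has $c_p(\beta,\delta) = \bigl(b_p(\beta) + \gamma_p(\delta),\, d_p(\delta)\bigr)$ for some bounded $G$--equivariant operator $\gamma_p \from D_p^{(2)} \to B_{p-1}^{(2)}$ measuring the failure of the splitting to be compatible with the differential.

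The core of the argument is the case in which every $b_p$ and every $d_p$ is injective. There I would apply~\eqref{eq:det-block} (in the general, not-necessarily-square, form of~\cite[Theorem~3.14(2)]{bk:Luck02}) with $T_1 = b_p$, $T_2 = d_p$ and $T_3 = \gamma_p$, obtaining $\det(c_p) = \det(b_p)\cdot\det(d_p)$ for all $p$. Since no determinant vanishes, taking logarithms, multiplying by $(-1)^p$, summing over $p$, and comparing with Definition~\ref{def:torsion} gives $\rho^{(2)}(C_*^{(2)}) = \rho^{(2)}(B_*^{(2)}) + \rho^{(2)}(D_*^{(2)})$ directly.

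The genuine obstacle is removing the injectivity assumption on the diagonal blocks: in an $\ell^{2}$--acyclic complex the lower differentials need not be injective, and their images need not be closed (nonvanishing of a Fuglede--Kadison determinant does not preclude $0$ from lying in the spectrum of $c_p^{*}c_p$), so~\eqref{eq:det-block} cannot be applied verbatim. To handle this I would follow~\cite[Chapters~1--3]{bk:Luck02}: pass to the category of finitely generated $\calN(G)$--modules, where the hypotheses --- vanishing $\ell^{2}$--homology together with nonvanishing determinants of the differentials, for all three complexes --- furnish a chain contraction; then replace $B_*^{(2)}$, $C_*^{(2)}$ and $D_*^{(2)}$ by chain-homotopy-equivalent complexes with injective differentials, compatibly with the short exact sequence and the direct-sum structure; use the invariance of $\ell^{2}$--torsion under such chain homotopy equivalences; and invoke the core case. (The general form of L\"uck's sum formula also carries a correction term equal to $-\rho^{(2)}$ of the long exact $\ell^{2}$--homology sequence viewed as a weakly acyclic complex; the vanishing of all $\ell^{2}$--homology collapses that sequence to the zero complex, whose torsion is $\log 1 = 0$ since the determinant of a zero map is $1$ by convention, so the correction term disappears.) For the complexes relevant to this paper --- the $\ell^{2}$--cellular chain complexes of the mapping tori under consideration --- the differentials can be arranged to be injective outright, so the core case already suffices and this reduction can be avoided.
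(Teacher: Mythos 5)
This statement is imported wholesale from L\"uck's book (Theorem~3.35(1) there); the paper offers no proof of it, so there is no internal argument to compare yours against --- the honest benchmark is L\"uck's own proof. Measured against that, your sketch is a reasonable reconstruction of the special case being quoted. The core step is right: with $C_p^{(2)} = B_p^{(2)} \oplus D_p^{(2)}$ and $B_*^{(2)}$ a subcomplex, $c_p$ is block-triangular with diagonal blocks $b_p$ and $d_p$, and when those are injective the multiplicativity \eqref{eq:det-block} gives $\det(c_p) = \det(b_p)\det(d_p)$ and hence additivity of \eqref{eq:torsion}. Your parenthetical about the correction term $\rho^{(2)}(LHS_*)$ collapsing to zero under total $\ell^{2}$--acyclicity is exactly why the cited special case has no error term.

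Two caveats. First, your reduction to the injective case leans on ``invariance of $\ell^{2}$--torsion under chain homotopy equivalence,'' which is not true as stated: for det--$\ell^{2}$--acyclic complexes a chain homotopy equivalence $f_*$ changes the torsion by its own torsion $t^{(2)}(f_*)$, so the replacement complexes must be chosen so that these corrections vanish or cancel across the short exact sequence. That bookkeeping is essentially the entire content of L\"uck's proof, so deferring it to ``Chapters~1--3'' is deferring the real difficulty --- acceptable for a black-box citation, but worth being aware of. Second, your closing claim that in this paper's application the differentials are ``injective outright'' is not quite accurate for the middle complex $C_*^{(2)}$ in \eqref{eq:ses}: it has three nonzero terms, and its degree-one differential has kernel equal to $\clos(\img c_{2}) \neq 0$. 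What is true, and what the paper actually uses, is that the diagonal blocks fed into \eqref{eq:det-block} are always the differentials of $B_*^{(2)}$ and $D_*^{(2)}$ (namely $I-K_{f}$ and $I-L_{f}$), and those are shown to be injective in Lemma~\ref{lem:J0} and in the proof of Theorem~\ref{th:det-splitting}; one also needs the convention that maps to or from the zero module have determinant $1$ to handle the degrees where $B_p^{(2)}$ or $D_p^{(2)}$ vanishes.
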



Suppose that $G$ is residually finite and has a finite classifying space, $BG$. Fix an automorphism $\Phi \in \Aut(G)$.  In this case, the $\ell^{2}$--torsion of the chain complex $C_{*}^{(2)}(E(G \rtimes_{\Phi} \ZZ))$ is well-defined and only depends on the group $G \rtimes_{\Phi} \ZZ$~\cite[Lemma~13.6]{bk:Luck02} and so we are justified in defining:
\[ \rho^{(2)}(G \rtimes_{\Phi} \ZZ) = \rho^{(2)}(C_{*}^{(2)}(E(G \rtimes_{\Phi} \ZZ))). \]  This invariant behaves in certain respects like the Euler characteristic.  Theorem~\ref{th:torsion sum} is one such example of this. Of importance in the present situation is the following theorem.

\begin{theorem}[{\cite[Theorem~7.27 (4)]{bk:Luck02}}]
\label{th:torsion}
Let $\Phi \from \FF \to \FF$ be an automorphism.  Then for all $k \in \NN$:
\[ \rho^{(2)}(\FF \rtimes_{\Phi^{k}} \ZZ) = k\rho^{(2)}(\FF \rtimes_{\Phi} \ZZ).  \]
\end{theorem}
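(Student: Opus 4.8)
The plan is to realise $\FF \rtimes_{\Phi^{k}} \ZZ$ as a subgroup of index $k$ in $G = \FF \rtimes_{\Phi} \ZZ$ and to derive the identity from the behaviour of $\ell^{2}$--torsion under restriction to finite-index subgroups, in exact analogy with the multiplicativity $\chi(H) = [G:H]\,\chi(G)$ of the Euler characteristic under finite covers. Fix the presentation \eqref{eq:free-by-cyclic} with stable letter $t$, so that the projection $G \surject \ZZ$ carries $t$ to $1$ and has kernel $\FF$. The preimage of $k\ZZ$ is then the subgroup $H = \I{\FF, t^{k}}$ of index $k$, and the relations $t^{-k} x t^{k} = \Phi^{k}(x)$ exhibit a canonical isomorphism $H \cong \FF \rtimes_{\Phi^{k}} \ZZ$.

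First I would pass to chain complexes. The mapping torus $M$ of a topological representative $f \from \Gamma \to \Gamma$ of $\Phi$ is a finite aspherical $2$--complex with $\pi_{1}(M) \cong G$, so $\widetilde{M}$ is a model for $EG$ with finitely many $G$--orbits of cells in each dimension; the $k$--fold cyclic cover $M_{k} \to M$ corresponding to $H \leq G$ has $\pi_{1}(M_{k}) \cong H$, and $\widetilde{M_{k}} = \widetilde{M}$ with the restricted $H$--action is a model for $EH$. Choosing coset representatives $g_{1},\dots,g_{k}$ for $H$ in $G$ gives an isometry $\ell^{2}(G) \cong \bigoplus_{i=1}^{k}\ell^{2}(H)$ of left Hilbert--$H$--modules, and under this identification each boundary operator $c_{p}$ of $C_{*}^{(2)}(\widetilde{M})$ over $\calN(G)$ becomes, after regrouping summands, the corresponding boundary operator $\mathrm{res}_{H}c_{p}$ of $C_{*}^{(2)}(\widetilde{M_{k}})$ over $\calN(H)$. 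By Theorem~\ref{th:vanish}, $C_{*}^{(2)}(\widetilde{M})$ is $\ell^{2}$--acyclic, hence so is $C_{*}^{(2)}(\widetilde{M_{k}})$; as $H$ is again free-by-cyclic, both complexes lie in the class for which $\ell^{2}$--torsion is defined.

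The heart of the matter is the restriction formula $\detG[H](\mathrm{res}_{H}T) = \detG(T)^{[G:H]}$ for a morphism $T$ of finitely generated free Hilbert--$G$--modules. I would deduce it from the scaling law for the von Neumann trace, $\Tr_{\calN(H)}(\mathrm{res}_{H}S) = [G:H]\,\Tr_{\calN(G)}(S)$: the spectral projections of $S = T^{*}T$ restrict to those of $\mathrm{res}_{H}(T^{*}T) = (\mathrm{res}_{H}T)^{*}(\mathrm{res}_{H}T)$, so the spectral density function over $\calN(H)$ equals $[G:H]$ times the one over $\calN(G)$, and substituting this into \eqref{eq:det} multiplies $\log\detG(T)$ by $[G:H]$. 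Applying this with $T = c_{p}$ for every $p$ and comparing with \eqref{eq:torsion} gives $\rho^{(2)}(C_{*}^{(2)}(\widetilde{M_{k}})) = k\,\rho^{(2)}(C_{*}^{(2)}(\widetilde{M}))$, which is precisely $\rho^{(2)}(\FF \rtimes_{\Phi^{k}} \ZZ) = k\,\rho^{(2)}(\FF \rtimes_{\Phi} \ZZ)$. I expect the restriction formula to be the main obstacle: one has to check carefully that restriction of operators commutes with the spectral calculus and that the von Neumann trace genuinely rescales by the index, after which the remainder is routine bookkeeping with coset representatives and the definitions.
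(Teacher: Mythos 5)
The paper does not prove this statement itself; it is quoted directly from L\"uck's book (Theorem~7.27(4)), so there is no in-paper argument to compare against. Your reconstruction — realizing $\FF \rtimes_{\Phi^{k}} \ZZ$ as the index-$k$ subgroup $H = \langle \FF, t^{k}\rangle \le G_{\phi}$, noting that restriction to $H$ turns the $G$--chain complex of the mapping torus into the $H$--chain complex of the $k$--fold cyclic cover while scaling the von Neumann trace (hence the spectral density and hence $\log\detG$) by $[G:H] = k$ — is correct, and it is the standard derivation of this multiplicativity from the behaviour of $\ell^{2}$--invariants under passage to finite-index subgroups, which is indeed how L\"uck establishes the cited result.
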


More generally the theorem holds for $G \rtimes _{\Phi} \ZZ$ when $G$ is residually finite and has a finite classifying space and moreover in a wider setting as well, see~\cite[Section~7.4.1]{bk:Luck02}.  

As mentioned in the introduction, L\"uck--Schick proved that the $\ell^{2}$--torsion $-\rho^{(2)}(\pi_{1}(\Sigma) \rtimes_{\Phi} \ZZ)$ equals $\frac{1}{6\pi}$ times the sum of the volumes of the hyperbolic components of $M_{f}$ where $\Phi \in \Aut(\pi_{1}(\Sigma))$ represents that outer automorphism induced by $f \from \Sigma \to \Sigma$~\cite{ar:LS99}.  Hence we view $-\rho^{(2)}(\FF \rtimes_{\Phi} \ZZ)$ as the appropriate analog of the hyperbolic volume of a free-by-cyclic group.


\section{Computing torsion from a topological representative}
\label{sec:compute}

The goal of this section is two-fold.  First, we will prove Theorem~\ref{th:det-splitting} that shows that the $\ell^{2}$--torsion $-\rho^{(2)}(G_{\phi})$ can be computed using information encoded in the strata $H_{s}$ of a relative train-track map.  (Recall $G_{\phi} = \FF \rtimes_{\Phi} \ZZ$ where $\phi$ is the outer automorphism class of $\Phi$.) Secondly, we will prove Corollary~\ref{co:compute} that gives an upper bound on $-\rho^{(2)}(G_{\phi})$ in terms of the norm of certain operators on the Hilbert spaces $\bigl(\ell^{2}(G_{\phi}) \bigr)^{n}$ related to the strata $H_{s}$.  This bound is improved in the next section.


\subsection{The Jacobians \texorpdfstring{\boldmath $J_{0}(f)$}{J\textunderscore 0(f)} and \texorpdfstring{\boldmath $J_{1}(f)$}{J\textunderscore 1(f)}}\label{sec:jacobian} Let $\Gamma$ be a graph and $f \from \Gamma \to \Gamma$ a morphism.  Fix a vertex $v \in \sfV(\Gamma)$, let $n = \#\abs{\sfE(\Gamma)}$, and let $\tau \subseteq \Gamma$ be a maximal subtree.    The quotient map $\Gamma \to \Gamma/\tau \cong \bigvee_{i=1}^{1 - \chi(\Gamma)} S^{1}$ induces an isomorphism $\pi_{1}(\Gamma,v) \cong \FF$, where we have a fixed identification between $\pi_{1}\left( \bigvee_{i=1}^{1 - \chi(\Gamma)} S^{1},x_{0} \right)$ and $\FF$.  A choice of an edge-path $p$ from $v$ to $f(v)$ induces an endomorphism of $\FF$ by $\gamma \mapsto p \cdot f(\gamma) \cdot p\inv$.  Denote this endomorphism by $\Phi$.    

Fix an arbitrary order on $\sfV(\Gamma) = \{v_{1},\ldots,v_{m}\}$ and $\sfE(\Gamma) = \{e_{1},\ldots,e_{n}\}$.  For each vertex $v_{i} \in \sfV(\Gamma)$, let $\alpha_{i} \subseteq \tau$ be the (possibly trivial) edge-path from $v$ to $v_{i}$.  Similarly, for each edge $e_{i} \in \sfE(\Gamma)$, let $\beta_{i} \subseteq \tau$ be the (possibly trivial) edge-path from $v$ to $\partial_{0} e_{i}$.    

Fix a lift of $v$ to $\tilde v \in \widetilde{\Gamma}$.  Let $\tilde p$ be the lift of $p$ to $\widetilde{\Gamma}$ starting at $\tilde v$ and let $\tilde w = \bd_{1}(\tilde p)$.  Consider the lift $f \from \Gamma \to \Gamma$ to $\tilde f \from \widetilde{\Gamma} \to \widetilde{\Gamma}$ such that $\tilde f(\tilde v) = \tilde w$.  Using this lift, we have $\tilde f (g x) = \Phi(g) \tilde f(x)$ for all $g \in \FF$ and $x \in \widetilde{\Gamma}$.  

For each $v_{i} \in \sfV(\Gamma)$, we let $\tilde \alpha_{i}\subset \widetilde{\Gamma}$ be the lift of $\alpha_{i}$ starting at $\tilde v$.  Similarly define lifts $\tilde \beta_{i} \subset \widetilde{\Gamma}$.  Let $\tilde v_{i} = \partial_{1}(\tilde \alpha_{i})$ and let $\tilde e_{i} \subset \widetilde{\Gamma}$ be the lift of $e_{i}$ with $\partial_{0}(\tilde e_{i}) = \partial_{1}(\tilde{\beta}_{i})$.  This induces identifications:
\begin{align*}
\sfV(\widetilde{\Gamma}) &= \FF \times \sfV(\Gamma) \mbox{ by } g\tilde v_{i} \leftrightarrow (g,v_{i}) \mbox{ and} \\
\sfE(\widetilde{\Gamma}) &= \FF \times \sfE(\Gamma) \mbox{ by } g\tilde e_{i} \leftrightarrow (g,e_{i}). 
\end{align*}
Using these identifications, we can write $\tilde f(\tilde v_{i}) = (g_{i}, v_{j_{i}})$ and the edge-path $\tilde f(\tilde e_{i})$ can be expressed as $\prod_{k} (g_{i,k}, e_{j_{i,k}}^{\varepsilon_{i,k}})$ where $g_{i,k} \in \FF$, $e_{j_{i,k}} \in \sfE(\Gamma)$ and $\varepsilon_{i,k} \in \{-1,1\}$.  We define:
\begin{align*}
\label{eq:derivation}  
\frac{\partial}{\partial v_{j}} f(v_{i}) &= \begin{cases} g_{j} & \mbox{if } j = j_{i} \\ 0 & \mbox{else} \end{cases} \mbox{ and } \\
\frac{\partial}{\partial e_{j}} f(e_{i}) &= \sum_{j = j_{i,k}} \varepsilon_{i,k}g_{i,k}.
\end{align*}

If $\Gamma = \bigvee_{r=1}^{\rank(\FF)} S^{1}$ and we have identified the edges of $\Gamma$ with a basis of $\FF$, then $\frac{\partial}{\partial e_{j}}$ is the usual Fox derivative $\frac{\partial}{\partial x_{j}} \from \FF \to \ZZ[\FF]$.

\begin{definition}
\label{def:J}
The \emph{Jacobian matrices} $J_{0}(f) \in \Mat_{m}(\ZZ[\FF])$ and $J_{1}(f) \in \Mat_{n}(\ZZ[\FF])$ are defined as:
\begin{equation}
 J_{0}(f) = \left[ \frac{\partial}{\partial v_{j}} f(v_{i}) \right]_{1 \leq i, \,j \leq m}  \mbox{ and }
 J_{1}(f) = \left[ \frac{\partial}{\partial e_{j}} f(e_{i}) \right]_{1 \leq i, \,j \leq n}.
\end{equation} 
\end{definition}

The definition of these matrices depends on several choices ($v$, $p$, ordering of vertices and edges) that are suppressed from the notation and do not matter for the sequel. 

As $\tilde f$ is a lift of $f$, the following is immediate.  Compare this proposition to \cite[Proposition~9.2]{un:DKL-2}.

\begin{proposition}
\label{prop:transition}
Suppose $f \from \Gamma \to \Gamma$ is a morphism.  Then $J_{1}(f)_{\ell^{1}} = M(f)$. 
\end{proposition}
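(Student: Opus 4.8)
The plan is to unwind both definitions entry by entry and to observe that passing to the universal cover replaces the edges of $f(e_i)$ by \emph{distinct} edges of the tree $\widetilde{\Gamma}$, which rules out any cancellation in the group ring. Fix $1 \le i \le n$ and write the reduced edge-path $f(e_i) = \prod_{k} e_{j_{i,k}}^{\varepsilon_{i,k}}$ in $\Gamma$. By definition the $(i,j)$ entry of $M(f)$ is $m_{i,j} = \#\{k \mid j_{i,k} = j\}$, while the $(i,j)$ entry of $J_1(f)$ is $\frac{\partial}{\partial e_j}f(e_i) = \sum_{j = j_{i,k}} \varepsilon_{i,k} g_{i,k} \in \ZZ[\FF]$, where $\tilde f(\tilde e_i) = \prod_{k} (g_{i,k}, e_{j_{i,k}}^{\varepsilon_{i,k}})$ is the lifted edge-path set up in Section~\ref{sec:jacobian}. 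So the proposition reduces to the claim that $\abs{\frac{\partial}{\partial e_j}f(e_i)} = m_{i,j}$.

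First I would observe that the lift $\tilde f(\tilde e_i)$ is again reduced: it projects to $f(e_i)$ under the covering $\widetilde{\Gamma} \to \Gamma$, and a backtrack in $\tilde f(\tilde e_i)$ would project to a backtrack in $f(e_i)$, which is impossible since we assume throughout that morphisms are reduced. Next, because $\widetilde{\Gamma}$ is a tree, a reduced edge-path in it is an embedded arc — were it not injective it would contain a nontrivial reduced closed subpath, which a tree does not admit — so $\tilde f(\tilde e_i)$ traverses each unoriented edge of $\widetilde{\Gamma}$ at most once. Under the identification $\sfE(\widetilde{\Gamma}) = \FF \times \sfE(\Gamma)$ the unoriented edge underlying the $k$th step $(g_{i,k}, e_{j_{i,k}}^{\varepsilon_{i,k}})$ is determined by the pair $(g_{i,k}, j_{i,k})$ alone, independently of the sign $\varepsilon_{i,k}$; hence for a fixed $j$ the group elements $\{g_{i,k} \mid j_{i,k} = j\}$ are pairwise distinct.

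Consequently no two terms of $\sum_{j = j_{i,k}} \varepsilon_{i,k}g_{i,k}$ are supported on the same element of $\FF$, so nothing cancels and $\abs{\frac{\partial}{\partial e_j}f(e_i)} = \#\{k \mid j_{i,k} = j\}$. Finally, since $\tilde f$ is a lift of $f$, projecting $\tilde f(\tilde e_i)$ to $\Gamma$ recovers $f(e_i) = \prod_k e_{j_{i,k}}^{\varepsilon_{i,k}}$, so $\#\{k \mid j_{i,k} = j\} = m_{i,j}$; thus $\abs{(J_1(f))_{i,j}} = m_{i,j}$ for all $i,j$, i.e., $J_1(f)_{\ell^1} = M(f)$. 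The only step needing genuine care is the absence of cancellation, which is precisely where the tree structure of $\widetilde{\Gamma}$ and the standing reducedness convention for morphisms are used; everything else is bookkeeping with the identifications fixed in Section~\ref{sec:jacobian}.
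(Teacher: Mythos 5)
Your argument is correct, and it fills in precisely the detail that the paper treats as immediate: the paper's entire proof is the sentence ``As $\tilde f$ is a lift of $f$, the following is immediate.'' The substantive point, which you correctly identify and verify, is that no cancellation can occur in $\frac{\partial}{\partial e_j}f(e_i)$; this follows because $\tilde f(\tilde e_i)$ is a reduced edge-path in the tree $\widetilde{\Gamma}$ and is therefore embedded, so each pair $(g,e_j)$ can index at most one step of the path. Your observation that it even suffices to rule out traversing the same edge $g\tilde e_j$ with opposite orientations (since same-orientation repeats would not affect the $\ell^1$ norm) is sound but unnecessary given the stronger embeddedness fact. This is the same reasoning the paper has in mind, spelled out.
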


\begin{example}
\label{ex:theta}
Let $\Gamma$ be the theta graph labeled as in pictured in Figure~\ref{fig:theta}.  A morphism $f \from \Gamma \to \Gamma$ is defined by $f(a) = b\inv$, $f(b) = c\inv$ and $f(c) = a\inv$.  

\begin{figure}[ht]
\centering
\begin{tikzpicture}
\draw[very thick] (0,0) circle [x radius=1.8, y radius=1];
\draw[very thick] (-1.8,0) -- (1.8,0);
\fill (-1.8,0) circle [radius=0.075];
\fill (1.8,0) circle [radius=0.075];
\draw[thick] (0.1,1.1) -- (0,1) -- (0.1,0.9);
\draw[thick] (0.1,-0.1) -- (0,0) -- (0.1,0.1);
\draw[thick] (0.1,-1.1) -- (0,-1) -- (0.1,-0.9);
\node at (0.2,1.3) {$a$};
\node at (0.2,0.3) {$b$};
\node at (0.2,-0.7) {$c$};
\node at (2.05,0) {$v$};
\node at (-2.05,0) {$w$};
\end{tikzpicture}
\caption{The graph $\Gamma$ in Example~\ref{ex:DKL}.}
\label{fig:theta}
\end{figure}

We order the vertices and edges alphabetically and let $\tau$ be the edge $b$.  This induces the isomorphism $\pi_{1}(\Gamma,v) \cong \FF = \I{x_{1},x_{2}}$ where $x_{1} = ba\inv$ and $x_{2} = bc\inv$.  Letting $p$ be the edge-path $b$ from $v$ to $f(v) = w$ we find:
\begin{equation*}
J_{0}(f) = \begin{bmatrix}
0 & 1 \\ x_{2} & 0
\end{bmatrix}
\mbox{ and }
J_{1}(f) = \begin{bmatrix}
0 & -1 & 0 \\
0 & 0 & -x_{2} \\
-x_{1} & 0 & 0
\end{bmatrix}.
\end{equation*}
\end{example}

\begin{example}
\label{ex:DKL}
We will construct the Jacobians $J_{0}(f)$ and $J_{1}(f)$ for the ``Running Example'' $f \from \Gamma \to \Gamma$ of \cite{ar:DKL15} (Example~2.2) and \cite{un:DKL-2} (Example~3.3).  The graph $\Gamma$ is shown in Figure~\ref{fig:DKL-graph} (we have reversed the orientation on some edges).  We order the vertices and edges alphabetically.

\begin{figure}[ht]
\centering
\begin{tikzpicture}
\draw[very thick] (0,0) circle [x radius=1.8, y radius=1];
\draw[very thick] (2.6,0) circle [x radius=0.8, y radius=0.8];
\draw[very thick] (-1.8,0) -- (1.8,0);
\fill (-1.8,0) circle [radius=0.075];
\fill (1.8,0) circle [radius=0.075];
\draw[thick] (3.3,0.1) -- (3.4,0) -- (3.5,0.1);
\draw[thick] (0.1,1.1) -- (0,1) -- (0.1,0.9);
\draw[thick] (0.1,-0.1) -- (0,0) -- (0.1,0.1);
\draw[thick] (0.1,-1.1) -- (0,-1) -- (0.1,-0.9);
\node at (0.2,1.3) {$a$};
\node at (0.2,0.3) {$b$};
\node at (3.7,0.1) {$c$};
\node at (0.2,-0.7) {$d$};
\node at (2.05,0) {$v$};
\node at (-2.05,0) {$w$};
\end{tikzpicture}
\caption{The graph $\Gamma$ in Example~\ref{ex:DKL}.}
\label{fig:DKL-graph}
\end{figure}

The morphism $f$ is described by:
\begin{equation*}
a  \mapsto d \qquad
b  \mapsto a \qquad
c  \mapsto ba\inv \qquad
d  \mapsto c\inv ab\inv da\inv b
\end{equation*}
Letting $\tau \subset \Gamma$ be the edge $a$ we have the induced isomorphism $\pi_{1}(\Gamma,v) \cong \FF = \I{x_{1},x_{2},x_{3}}$ where $x_{1} = ab\inv$, $x_{2} = da\inv$ and $x_{3} = c\inv$.  The corresponding automorphism $\Phi \in \Aut(\FF)$ is: 
\begin{equation*}
x_{1} \mapsto x_{2} \qquad
x_{2} \mapsto x_{3}x_{1}x_{2}x_{1}\inv x_{2}\inv \qquad
x_{3} \mapsto x_{1}
\end{equation*}
Fix a lift of $\tilde v \in \widetilde{\Gamma}$ and let $\tilde f$ be the lift of $f$ that fixes $\tilde v$.  Thus $\tilde f \from \widetilde{\Gamma} \to \widetilde{\Gamma}$ is $\Phi$--equivariant in the sense that $\tilde f(x z) = \Phi(x) \tilde f(z)$ for $x \in \FF$ and $z \in \widetilde{\Gamma}$.  

We have $\tilde f(\tilde v) = \tilde v$ and $\tilde f(\tilde w) = x_{2} \tilde w$ and therefore $J_{0}(f) = \left[ \begin{smallmatrix} 1 & 0 \\ 0 & x_{2} \end{smallmatrix}\right]$.  

Clearly, $\tilde f(\tilde a) = \tilde d$, $\tilde f(\tilde b) = \tilde a$.  The edge-paths $\tilde f(\tilde c)$ and $\tilde f(\tilde d)$ are shown in Figure~\ref{fig:DKL-edgepaths}.  With this we find:
\begin{equation*}
J_{1}(f) = \begin{bmatrix}
0 & 0 & 0 & 1 \\
1 & 0 & 0 & 0 \\
-x_{1}\inv & 1 & 0 & 0 \\
x_{3} - x_{3} x_{1}x_{2} & -x_{3}x_{1} + x_{3}x_{1}x_{2} & -x_{3} & x_{3}x_{1}
\end{bmatrix}
\end{equation*}
Compare with the matrix $A(t)$ in \cite[Example~9.5]{un:DKL-2}.

\begin{figure}[ht]
\centering
\begin{tikzpicture}[scale=0.9]
\draw[very thick] (0,2.5) -- (4,2.5) (0,0) -- (12,0);
\foreach \p in {0,1,2}
	\fill (\p*2,2.5) circle [radius=0.075];
\draw[thick] (0.9,2.6) -- (1,2.5) -- (0.9,2.4);
\draw[thick] (3.1,2.6) -- (3,2.5) -- (3.1,2.4);
\foreach \p in {0,1,2,3,4,5,6}
	\fill (\p*2,0) circle [radius=0.075];
\draw[thick] (1.1,0.1) -- (1,0) -- (1.1,-0.1);
\draw[thick] (2.9,0.1) -- (3,0) -- (2.9,-0.1);
\draw[thick] (5.1,0.1) -- (5,0) -- (5.1,-0.1);
\draw[thick] (6.9,0.1) -- (7,0) -- (6.9,-0.1);
\draw[thick] (9.1,0.1) -- (9,0) -- (9.1,-0.1);
\draw[thick] (10.9,0.1) -- (11,0) -- (10.9,-0.1);
\node at (2,3.5) {$\tilde f(\tilde c)$};
\node at (2,1) {$\tilde f(\tilde d)$};
\node at (0,2.2) {$\tilde v$};
\node at (2,2.2) {$x_{1}\inv \tilde w$};
\node at (4,2.2) {$x_{1}\inv \tilde v$};
\node at (1,2.9) {$\tilde b$};
\node at (3,2.9) {$x_{1}\inv \tilde a$};
\node at (0,-0.3) {$\tilde v$};
\node at (2,-0.3) {$x_{3} \tilde v$};
\node at (4,-0.3) {$x_{3} \tilde w$};
\node at (6,-0.3) {$x_{3} x_{1} \tilde v$};
\node at (8,-0.3) {$x_{3} x_{1} x_{2} \tilde w$};
\node at (10,-0.3) {$x_{3}x_{1} x_{2} \tilde v$};
\node at (12.4,-0.3) {$x_{3}x_{1} x_{2} x_{1}\inv \tilde w$};
\node at (1,0.4) {$x_{3} \tilde c$};
\node at (3,0.4) {$x_{3} \tilde a$};
\node at (5,0.4) {$x_{3} x_{1}\tilde b$};
\node at (7,0.4) {$x_{3} x_{1} \tilde d$};
\node at (9,0.4) {$x_{3}x_{1} x_{2} \tilde a$};
\node at (11,0.4) {$x_{3} x_{1} x_{2} \tilde b$};
\end{tikzpicture}
\caption{The edge-paths $\tilde f(\tilde c)$ and $\tilde f(\tilde d)$ from Example~\ref{ex:DKL}.}\label{fig:DKL-edgepaths}
\end{figure}
\end{example}

The Jacobian satisfies a chain rule like the usual Fox derivatives~\cite{ar:Fox53}.

\begin{proposition}
\label{prop:chain rule}
Let $f_{1}, f_{2} \from \Gamma \to \Gamma$ be morphisms that both fix the vertex $v$ and suppose that $f_{1}$ represents $\Phi \in \End(\FF)$ where $\pi_{1}(\Gamma,v) \cong \FF$ using the trivial edge-path.  Then \[J_{1}(f_{1} \circ f_{2}) = \Phi\bigl(J_{1}(f_{2})\bigr)J_{1}(f_{1}).\]  
\end{proposition}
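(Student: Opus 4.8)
\textit{Proof proposal.} The plan is to work entirely on the universal cover and track how the coefficient of a given edge in an edge-path transforms under $\tilde f_{1}$. First I would fix the lifts $\tilde f_{1}, \tilde f_{2} \from \widetilde\Gamma \to \widetilde\Gamma$ that fix $\tilde v$; for $f_{1}$ this is exactly the hypothesis that it represents $\Phi$ via the trivial edge-path, and for $f_{2}$ it is the analogous choice of trivial edge-path $p$ from $v$ to $f_{2}(v)=v$. Then $\tilde f_{1}\circ\tilde f_{2}$ fixes $\tilde v$, so it is precisely the lift used to compute $J_{1}(f_{1}\circ f_{2})$, and $\tilde f_{1}(g x)=\Phi(g)\,\tilde f_{1}(x)$ for all $g\in\FF$, $x\in\widetilde\Gamma$. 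Using the identification $\sfE(\widetilde\Gamma)=\FF\times\sfE(\Gamma)$, for an edge-path $\widetilde\gamma=\prod_{k}(h_{k},e_{\ell_{k}}^{\delta_{k}})$ in $\widetilde\Gamma$ I would write $D_{j}(\widetilde\gamma)=\sum_{k\,:\,\ell_{k}=j}\delta_{k}h_{k}\in\ZZ[\FF]$, so that by Definition~\ref{def:J} one has $\frac{\partial}{\partial e_{j}}f(e_{i})=D_{j}(\tilde f(\tilde e_{i}))$ for any morphism $f$ fixing $v$ via the trivial edge-path.

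The three facts to record about $D_{j}$, all immediate from the definition, are: (a) $D_{j}$ is additive under concatenation and is unchanged when the edge-path is reduced, since each cancellation of an edge against its reverse deletes a pair $(h,e_{\ell}^{\delta})$, $(h,e_{\ell}^{-\delta})$ and so contributes $0$ to every $D_{j}$, whence $D_{j}$ is a well-defined invariant of the homotopy class rel endpoints; (b) $D_{j}(g\widetilde\gamma)=g\,D_{j}(\widetilde\gamma)$; and (c) $D_{j}(\widetilde\gamma^{-1})=-D_{j}(\widetilde\gamma)$, because reversing an edge-path negates each exponent. Next I would compute: writing $\tilde f_{2}(\tilde e_{i})=\prod_{k}(g_{i,k},e_{j_{i,k}}^{\varepsilon_{i,k}})$, the morphism $\tilde f_{1}$ sends this to the concatenation $\prod_{k}\tilde f_{1}\bigl(g_{i,k}\tilde e_{j_{i,k}}^{\varepsilon_{i,k}}\bigr)$, and by $\Phi$--equivariance the $k$--th factor equals $\Phi(g_{i,k})\cdot\tilde f_{1}(\tilde e_{j_{i,k}})^{\varepsilon_{i,k}}$. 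Applying $D_{j}$ and using (a)--(c),
\[
\frac{\partial}{\partial e_{j}}(f_{1}\circ f_{2})(e_{i})=D_{j}\bigl(\tilde f_{1}\tilde f_{2}(\tilde e_{i})\bigr)=\sum_{k}\varepsilon_{i,k}\,\Phi(g_{i,k})\,D_{j}\bigl(\tilde f_{1}(\tilde e_{j_{i,k}})\bigr)=\sum_{k}\varepsilon_{i,k}\,\Phi(g_{i,k})\,\bigl(J_{1}(f_{1})\bigr)_{j_{i,k},\,j}.
\]
Grouping the sum by the value $\ell=j_{i,k}$, using linearity of the extension of $\Phi$ to $\ZZ[\FF]$ together with $\sum_{k\,:\,j_{i,k}=\ell}\varepsilon_{i,k}g_{i,k}=\frac{\partial}{\partial e_{\ell}}f_{2}(e_{i})=\bigl(J_{1}(f_{2})\bigr)_{i,\ell}$, the right side becomes $\sum_{\ell}\Phi\bigl((J_{1}(f_{2}))_{i,\ell}\bigr)\,(J_{1}(f_{1}))_{\ell,j}$, which is the $(i,j)$ entry of $\Phi(J_{1}(f_{2}))\,J_{1}(f_{1})$.

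The step requiring the most care is fact (a): the edge-path $\tilde f_{1}(\tilde f_{2}(\tilde e_{i}))$ obtained by concatenating the reduced images $\tilde f_{1}(\tilde e_{j_{i,k}})^{\varepsilon_{i,k}}$ need not itself be reduced, whereas $J_{1}(f_{1}\circ f_{2})$ is computed from the reduced representative $\widetilde{f_{1}\circ f_{2}}(\tilde e_{i})$; one must check that tightening does not alter any $D_{j}$, which is exactly the cancellation bookkeeping in (a). The same observation shows $D_{j}$ is insensitive to whether one reduces before or after differentiating, so no inconsistency arises from the convention that morphisms have reduced image. With (a) in hand the two edge-paths have the same $D_{j}$, and the computation above completes the proof.
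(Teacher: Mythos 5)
Your proposal is correct and takes the same route as the paper: fix the lifts at $\tilde v$, use $\Phi$--equivariance of $\tilde f_{1}$ to express $\tilde f_{1}\circ\tilde f_{2}(\tilde e_{i})$ as a concatenation of translates $\Phi(g_{i,k})\tilde f_{1}(\tilde e_{j_{i,k}})^{\varepsilon_{i,k}}$, and read off the entries. Your treatment is in fact more careful than the paper's (which gives a two-line sketch), in that you explicitly isolate the functional $D_{j}$, verify its equivariance and invariance under tightening, and thereby justify that it is harmless that the concatenated image may fail to be reduced; that bookkeeping is needed and is correct.
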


\begin{proof}
Use the notation as in the above discussion of $J_{1}(f)$.  Given an edge $\tilde e \in E(\widetilde{\Gamma})$ we write $\tilde f_{2}(\tilde e) = \prod (g_{k},e^{\varepsilon_{k}}_{j_{k}})$.  Since $\tilde f_{1}(gx) = \Phi(g)\tilde f_{1}(x)$ we have that $\tilde f_{1} \circ \tilde f_{2}(\tilde e)$ can be expressed as the concatenation of the edge-paths $\Phi(g_{k})\tilde f_{1}(\tilde e_{j_{k}})$.  
\end{proof}

We record the following elementary consequence of the chain rule.

\begin{corollary}
\label{co:chain rule}
Suppose $f \from \Gamma \to \Gamma$ is a morphism that fixes the vertex $v$ and that represents $\Phi \in \Aut(\FF)$ where $\pi_{1}(\Gamma,v) \cong \FF$ using the trivial edge-path.  Then for all $k \geq 1$ we have $(tJ_{1}(f))^{k} = t^{k}J_{1}(f^{k})$ in $\Mat_{\#\abs{\sfE(\Gamma)}}(\ZZ[\FF \rtimes_{\Phi} \ZZ])$.
\end{corollary}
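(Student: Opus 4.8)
The plan is to prove the identity by induction on $k$, using Proposition~\ref{prop:chain rule} for the inductive step; the base case $k = 1$ is a tautology.  The one preliminary point to record is a commutation rule in $\ZZ[\FF \rtimes_{\Phi} \ZZ]$: the defining relation $t\inv x t = \Phi(x)$ rearranges to $xt = t\Phi(x)$ for $x \in \FF$, hence $at = t\Phi(a)$ for all $a \in \ZZ[\FF]$, and applying this to each entry gives $A\,t = t\,\Phi(A)$ for any $A \in \Mat_{n}(\ZZ[\FF])$ (here $\Phi$ acts entrywise, which makes sense because $\Phi \in \Aut(\FF)$ extends to a ring automorphism of $\ZZ[\FF]$).

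For the inductive step, suppose $(tJ_{1}(f))^{k} = t^{k}J_{1}(f^{k})$.  Since $f$ fixes $v$, so does $f^{k}$, so Proposition~\ref{prop:chain rule} applies with $f_{1} = f$ (the factor representing $\Phi$) and $f_{2} = f^{k}$, giving $J_{1}(f^{k+1}) = J_{1}(f \circ f^{k}) = \Phi\bigl(J_{1}(f^{k})\bigr)J_{1}(f)$.  Then
\[ (tJ_{1}(f))^{k+1} = (tJ_{1}(f))^{k}\,\bigl(tJ_{1}(f)\bigr) = t^{k}J_{1}(f^{k})\,t\,J_{1}(f) = t^{k+1}\,\Phi\bigl(J_{1}(f^{k})\bigr)J_{1}(f) = t^{k+1}J_{1}(f^{k+1}), \]
where the third equality uses the commutation rule above and the fourth uses the chain rule.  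This closes the induction.

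I do not expect a genuine obstacle here; the only things to get right are the direction of the twisting relation (it is $at = t\Phi(a)$, matching the convention $t\inv x t = \Phi(x)$ of \eqref{eq:free-by-cyclic}, not its inverse) and the bookkeeping in Proposition~\ref{prop:chain rule}, namely that one must feed in $f_{1} = f$ and $f_{2} = f^{k}$ so that $f_{1}\circ f_{2} = f^{k+1}$ and $f_{1}$ is the map representing $\Phi$.
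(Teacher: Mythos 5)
Your proof is correct and follows essentially the same route as the paper: induction on $k$ via the chain rule (Proposition~\ref{prop:chain rule}) with $f_{1}=f$, $f_{2}=f^{k}$, combined with the relation $\Phi(A)=t\inv A t$ (which you phrase equivalently as $At=t\,\Phi(A)$) in $\Mat_{n}(\ZZ[\FF\rtimes_{\Phi}\ZZ])$. The only difference is cosmetic indexing of the induction step.
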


\begin{proof}
By the chain rule (Proposition~\ref{prop:chain rule}) we have: \[J_{1}(f^{k}) = \Phi\bigl(J_{1}(f^{k-1}) \bigr) J_{1}(f) = t\inv J_{1}(f^{k-1}) t J_{1}(f) \in \Mat_{n}(\ZZ[\FF \rtimes_{\Phi} \ZZ]).\]  By induction, we assume $(tJ_{1}(f))^{k-1} = t^{k-1}J_{1}(f^{k-1})$.  Then we compute:
\begin{align*}
 (tJ_{1}(f))^{k} &= (tJ_{1}(f))^{k-1} \cdot tJ_{1}(f)  = t^{k-1}J_{1}(f^{k-1}) \cdot tJ_{1}
 (f) \\
 & = t^{k} \cdot t\inv J_{1}(f^{k-1}) tJ_{1}(f) = t^{k}J_{1}(f^{k}). \qedhere
\end{align*}
\end{proof}

%


\subsection{Splitting along strata}\label{sec:split}

We now can state and prove the main result of this section, that the $\ell^{2}$--torsion can be computed using data encoded in the strata $H_{s}$.

\begin{convention}
\label{con:tt}
For this section, we let $f \from \Gamma \to \Gamma$ be a relative train-track map representing $\phi \in \Out(\FF)$ with filtration $\emptyset = \Gamma_{0} \subset \Gamma_{1} \subset \cdots \subset \Gamma_{S} = \Gamma$ and strata $H_{s}$ which is the closure of $\Gamma_{s} - \Gamma_{s-1}$.  We let $m = \#\abs{\sfV(\Gamma)}$, $n = \#\abs{\sfE(\Gamma)}$ and $n_{s} = \#\abs{\sfE(H_{s})}$.  We assume that the image of every vertex of $\Gamma$ is fixed by $f$.  We fix an ordering $\sfV(\Gamma) = \{v_{1}, \ldots v_{m}\}$ such that if $v_{i}$ is fixed and $v_{j}$ is not, then $i < j$, i.e., list the fixed vertices first.  Lastly, we fix an ordering $\sfE(\Gamma) = \{e_{1},\ldots,e_{n}\}$ such that if $e_{i} \in H_{i'}$, $e_{j} \in H_{j'}$ and $i'<j'$ then $i<j$, i.e., list the edges in lower stratum first.  Under the identification $\pi_{1}(\Gamma,v) \cong \FF$ we assume that $v$ is fixed by $f$ and so by $\Phi$ we will denote the automorphism of $\FF$ induced by $f$ using the trivial edge-path.
\end{convention}

As in Section~\ref{sec:rtt}, for each $1 \leq s \leq S$, we let $i_{s}$ denote the smallest index such that $e_{i_{s}} \in \sfE(H_{s})$ and $J_{1}(f)_{s}$ the $n_{s} \times n_{s}$--submatrix of $J_{1}(f)$ with $i_{s} \leq i,j \leq i_{s} - 1 + n_{s}$.  Then $J_{1}(f)$ is lower block triangular with blocks $J_{1}(f)_{s}$ along the diagonal and by Proposition~\ref{prop:transition},  $\bigl(J_{1}(f)_{s}\bigr)_{\ell^{1}} = M(f)_{s}$.  

\begin{remark}
\label{rem:chain rule}
By comparing submatrices, Corollary~\ref{co:chain rule} also shows that $(tJ_{1}(f)_{s})^{k} = t^{k}J_{1}(f^{k})_{s} \in \Mat_{n_{s}}(\ZZ[G_{\phi}])$ for all $k \geq 1$ and $1 \leq s \leq S$.
\end{remark}

By $I$ we denote the identity matrix over a ring with identity (the size and ring will usually be clear from the context).  Abusing notation, we will also denote by $I$ the identity operator on a Hilbert space (the space will usually be clear from the context).  Let $K_{f}$ be the operator on $\bigl(\ell^{2}(G_{\phi})\bigr)^{m}$ induced by right-multiplication by $tJ_{0}(f)$.  Likewise, let $L_{f}$ be the 
operator on $\bigl(\ell^{2}(G_{\phi})\bigr)^{n}$ induced by right-multiplication by $tJ_{1}(f)$ and $L_{f, \, s}$ the the operator on $\bigl(\ell^{2}(G_{\phi})\bigr)^{n_{s}}$ induced by right-multiplication by $tJ_{1}(f)_{s}$ 
  
\begin{lemma}
\label{lem:J0}
Suppose $f \from \Gamma \to \Gamma$ is as in Convention~\ref{con:tt}.  Then:
\begin{enumerate}
\item $I - K_{f} \from \bigl(\ell^{2}(G_{\phi})\bigr)^{m}  \to \bigl(\ell^{2}(G_{\phi})\bigr)^{m}$ is injective; and
\item $\log \det (I - K_{f}) = 0$.
\end{enumerate}
\end{lemma}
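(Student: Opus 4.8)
The claim has two parts; I would treat them together by understanding the structure of the matrix $tJ_0(f)$ well enough to analyze the operator $I - K_f$ directly.

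First I would unpack what $J_0(f)$ looks like under Convention~\ref{con:tt}. Since $f$ fixes the image of every vertex and we have ordered $\sfV(\Gamma) = \{v_1,\dots,v_m\}$ so that fixed vertices come first, the combinatorial derivative $\frac{\partial}{\partial v_j}f(v_i)$ has a special shape: in each row $i$ there is exactly one nonzero entry, sitting in the column $j = j_i$ where $v_{j_i} = f(v_i)$ (as a vertex of $\Gamma$, recalling the image is a fixed vertex), and that entry is the group element $g_j \in \FF$ recording how $\tilde f(\tilde v_i)$ differs from $\tilde v_{j_i}$. In particular $J_0(f)_{\ell^1}$ is a permutation-like $0$--$1$ matrix: each row has a single $1$. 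Moreover, because we listed the fixed vertices first and $f$ sends every vertex into the fixed set, the map $v_i \mapsto v_{j_i}$ has image contained in the fixed vertices, so on the fixed vertices it is a self-map of a finite set whose iterates eventually land in periodic cycles, while the non-fixed vertices feed forward into the fixed ones. For a fixed vertex $v$ itself we moreover have $\tilde f(\tilde v) = \tilde v$ by the last sentence of Convention~\ref{con:tt}, so the corresponding diagonal entry of $J_0(f)$ is $1 \in \FF$, i.e.\ the $(1,1)$ entry (say) of $tJ_0(f)$ is exactly $t$.

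For injectivity of $I - K_f$, I would argue that $K_f$ is a partial-isometry-type operator whose powers have controlled norm on each orbit, so that $I - K_f$ has trivial kernel; concretely, suppose $(I - K_f)(\boldsymbol\xi) = 0$, i.e.\ $\boldsymbol\xi = \boldsymbol\xi \cdot tJ_0(f)$, hence $\boldsymbol\xi = \boldsymbol\xi \cdot (tJ_0(f))^k$ for all $k$. Using the chain-rule identity (the $J_0$-analogue of Corollary~\ref{co:chain rule}, which follows the same way since $f$ fixes $v$), $(tJ_0(f))^k = t^k J_0(f^k)$, and $J_0(f^k)$ is again a matrix with one group-element entry per row. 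Tracking a single coordinate $\xi_i$: it equals $\xi_{j} \cdot t^k g$ for the appropriate vertex $v_j$ reached after $k$ steps and a suitable $g \in G_\phi$; since right-multiplication by $t^k g$ is an isometry, $\|\xi_i\| = \|\xi_j\|$, and since the vertex orbit is eventually periodic, chasing far enough around a cycle forces a relation $\xi_i = \xi_i \cdot w$ for a nontrivial element $w \in G_\phi$ that projects nontrivially to $\ZZ$ (because some $t$ appears — indeed on the cycle through the fixed vertex $v$ we pick up a genuine power of $t$). An element of $\ell^2(G_\phi)$ fixed under right translation by an infinite-order element must be zero, giving $\boldsymbol\xi = 0$. (If one prefers, one can instead invoke the determinant computation of Theorem~\ref{th:vanish}/the general machinery, but the direct orbit argument is cleaner.)

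For the determinant, I would use the block-determinant formula \eqref{eq:det-block} together with the fact that reordering coordinates along vertex orbits puts $I - K_f$ into a lower block-triangular form whose diagonal blocks correspond to the periodic cycles of the vertex map (non-periodic vertices contribute blocks that are, after permutation, unipotent — an identity matrix with some off-diagonal entries — hence determinant $1$). Each cyclic block, after cyclically composing around the loop, reduces the computation to a single $1\times 1$ operator of the form $I - (\text{right mult.\ by } wt^r)$ for some $w \in \FF$ and $r \geq 1$ (with $r \geq 1$ precisely because every cycle of vertices, once it enters the fixed set, is a fixed point of the vertex map with $\tilde f(\tilde v) = t^{-1}\cdot(\text{stuff})\cdot t$-type behavior producing a net power of $t$; more carefully, the unique cycle is the fixed vertex $v$ with entry $t$, and any other fixed vertex $v_j$ with $\tilde f(\tilde v_j) = g_j \tilde v_j$ gives a $1\times 1$ block $I - g_j t$). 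The Fuglede--Kadison determinant of $I - (\text{right mult.\ by } \gamma)$ for $\gamma \in G_\phi$ of infinite order is $1$ — this is a standard computation (e.g.\ via $\det_G$ being multiplicative and $\det_G(1-\gamma) = \exp(\mathrm{Tr}_G \log(1-\gamma)) = 1$ since $\mathrm{Tr}_G(\gamma^k) = 0$ for all $k \geq 1$, using the von Neumann trace), or can be cited from L\"uck's book. Hence $\log\det(I - K_f) = \sum (\text{zeros}) = 0$.

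\textbf{The main obstacle.} The bookkeeping for the block-triangular decomposition along vertex orbits is the fiddly part — one must be careful that the "feed-forward" non-periodic vertices really do assemble into unipotent blocks (so they contribute $1$), and that each genuine cycle contributes a factor $t^r$ with $r \geq 1$ so the relevant group element has infinite order. The key computational input — that $\det_G(I - (\text{right mult by }\gamma)) = 1$ for $\gamma$ of infinite order — is the only nontrivial analytic ingredient and should either be cited from \cite{bk:Luck02} or proved via the trace formula; everything else is elementary linear algebra over $\ZZ[G_\phi]$ combined with the block formula \eqref{eq:det-block} and Lemma~\ref{lem:det-bound}.
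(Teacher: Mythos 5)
Your proposal is correct and, once the excess is stripped away, is essentially the paper's argument: both use the block-determinant formula \eqref{eq:det-block} to peel off the non-fixed vertices (whose contribution is an identity block), reduce the remaining piece to $1\times 1$ operators of the form $1 - tx$ with $x \in \FF$, and then invoke $\detG[G_{\phi}](1-tx)=1$, proved by induction from $\ell^{2}(\ZZ)$ as in L\"uck's book (your heuristic $\Tr_G(\gamma^k)=0$ argument is morally right but the power series for $\log(1-\gamma)$ does not converge in norm when $\opnorm{\gamma}=1$, so the citation route is the one to take). Where you over-engineer: Convention~\ref{con:tt} asserts that the \emph{image} of every vertex is fixed by $f$, so on the fixed vertices the vertex map is already the identity, not merely a finite-order permutation. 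Consequently the top-left $m' \times m'$ block of $I-tJ_0(f)$ is already \emph{diagonal} with entries $1-tx_i$; there are no nontrivial cycles, and your "cyclically compose around the loop" step (which would itself require a separate Fuglede--Kadison identity along the lines of $\det(I-AB)=\det(I-BA)$) is never needed. The same observation short-circuits your injectivity argument: the coordinates $\xi_i$ with $i>m'$ vanish at once because the corresponding \emph{columns} of $tJ_0(f)$ are zero, and then the fixed coordinates satisfy $\xi_i = \xi_i \cdot tx_i$, forcing $\xi_i=0$ by square-summability along the infinite cyclic subgroup $\langle tx_i \rangle$. One bookkeeping caution: the paper's $K_f$ is \emph{right}-multiplication on row vectors, so the equation $\bxi = \bxi\cdot t^kJ_0(f^k)$ expresses $\xi_i$ in terms of $\xi_j$ with $f^k(v_j)=v_i$ (preimages), not in terms of $\xi_{f^k(v_i)}$ as your "vertex reached after $k$ steps" phrasing suggests; either direction closes the argument here, but be careful which convention you are in.
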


\begin{proof}
Let $m'$ be the number of vertices of $\Gamma$ that are fixed by $f$ and let $I - tJ_{0}(f)^{[m']}$ be the $m' \times m'$ submatrix of $I - tJ_{0}(f)$ where $1\leq i,j \leq m'$.  This submatrix is diagonal with entries $1-tx_{i}$ for some $x_{i} \in \FF$.  Then in block form we can write:
\begin{equation*}
I-tJ_{0}(f) = \begin{bmatrix}
I - tJ_{0}(f)^{[m']} & 0 \\ A & I
\end{bmatrix}.
\end{equation*}
where $A$ has size $(m-m') \times m'$.  Let $K^{[m']}_{f}$ be the restriction of $K_{f}$ to $\bigl( \ell^{2}(G_{\phi} \bigr)^{m'}$.    

\noindent (1) \ Let $\bxi = (\xi_{1},\ldots,\xi_{m})$ be an $m$--tuple of functions in $\ell^{2}(G_{\phi})$ and suppose that $K_{f}(\bxi) = \bxi$.  Thus $(\xi_{1},\ldots,\xi_{m'})tJ_{0}(f)^{[m']} = (\xi_{1},\ldots,\xi_{m'})$. Hence there are elements $x_{i} \in \FF$ such that $\xi_{i} \cdot tx_{i} = \xi_{i}$ for $1 \leq i \leq m'$.  Therefore for any $h \in G_{\phi}$ and $k \in \NN$, we have $\xi_{i}(h(tx_{i})^{-k}) = \xi_{i}(h)$.  The $t$--exponent of the elements $h(tx_{i})^{-k} \in G_{\phi}$ are all distinct and hence the elements $h(tx\inv)^{-k}$ are also distinct.  As: \[ \sum_{k \in \NN} \bigl(\xi_{i}(h(tx_{i})^{-k})\bigr)^{2} \leq \sum_{g \in G_{\phi}} \bigl(\xi_{i}(g)\bigr)^{2} \leq \| \xi_{i} \|^{2} < \infty \] we must have $\xi_{i}(h) = 0$.  This shows that $I - K_{f}$ is injective.

\noindent (2) \ 
We have that $\det (I - K_{f}) = \det (I - K^{[m']}_{f})$ by \eqref{eq:det-block} as $\det(I) = 1$.  Letting $T_{i} \from \ell^{2}(G_{\phi}) \to \ell^{2}(G_{\phi})$ be the operator induced by right-multiplication by $1-tx_{i}$ and appealing to \eqref{eq:det-block} again, by induction we have:
\[ \det(I - K_{f}^{[m']}) = \prod_{i=1}^{m'} \det(T_{i}). \]  For each $i$, the operator $T_{i}$ is obtained by induction from the operator $T \from \ell^{2}(\ZZ) \to \ell^{2}(\ZZ)$ defined by $T(\xi) = \xi \cdot (1 - z)$, where $\ZZ = \I{z}$, with respect to the inclusion $\iota_{i} \from z \mapsto tx_{i}$ (see \cite[Section~1.1.5]{bk:Luck02}).  

Then $\detG[G_{\phi}](T_{i}) = \detG[G_{\phi}](\iota_{i}^{*}T) = \detG[\ZZ](T) = 1$ by \cite[Theorem~3.14~(6) \& Example~3.22]{bk:Luck02}.  Thus $\det(I - K_{f}) = 1$ as claimed.
\end{proof}

\begin{theorem}[{cf.~\cite[Theorem~7.29]{bk:Luck02}}]
\label{th:det-splitting}
Suppose $f \from \Gamma \to \Gamma$ is as in Convention~\ref{con:tt}.  Then:
\begin{equation}
\label{eq:det-splitting}
-\rho^{(2)}(G_{\phi}) = \sum_{s=1}^{S} \log \det \bigl(I - L_{f, \, s}\bigr)
\end{equation}
\end{theorem}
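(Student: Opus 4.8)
The plan is to realize $-\rho^{(2)}(G_\phi)$ as the $\ell^2$--torsion of an explicit free chain complex built from the relative train-track map $f \from \Gamma \to \Gamma$, then use the block-triangular structure of $J_1(f)$ to split the determinant along strata. First I would recall that the mapping torus $M_f$ of $f$ is a finite CW--complex (built from the cells of $\Gamma$ together with one extra cell in each dimension coming from the $[0,1]$ direction) which is aspherical with $\pi_1(M_f) \cong G_\phi$; hence $\widetilde{M_f}$ serves as a model for $EG_\phi$ with finitely many $G_\phi$--orbits of cells in each dimension, and $\rho^{(2)}(G_\phi) = \rho^{(2)}(C_*^{(2)}(\widetilde{M_f}))$. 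The cellular chain complex $C_*(\widetilde{M_f})$ over $\ZZ[G_\phi]$ has the form $0 \to C_2 \to C_1 \to C_0 \to 0$ where, after the identifications of Section~\ref{sec:jacobian}, the boundary maps are expressed in terms of $I - tJ_0(f)$ and $I - tJ_1(f)$: roughly, $C_2 \cong \ZZ[G_\phi]^n$ (edges of $\Gamma$ times the $t$--direction) maps via $I - tJ_1(f)$--type data, and $C_1 \cong \ZZ[G_\phi]^{m+n}$, $C_0 \cong \ZZ[G_\phi]^m$ with the relevant block being $I - tJ_0(f)$. This is exactly the combinatorial picture from \cite[Section~7.4.3]{bk:Luck02}, which I would invoke.

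Next, by Theorem~\ref{th:vanish} we have $H_p^{(2)}(G_\phi) = 0$ for all $p$, so the $\ell^2$--torsion of this chain complex is defined provided each boundary operator has nonzero Fuglede--Kadison determinant; Lemma~\ref{lem:J0}(1) handles injectivity of the $I - K_f$ block, and one argues similarly (or cites L\"uck) that $I - L_f$ is injective as well, so all the relevant operators are weak isomorphisms and the alternating sum \eqref{eq:torsion} makes sense. Computing that alternating sum, the contributions of $\det(I - K_f)$ and the identity blocks cancel or vanish: by Lemma~\ref{lem:J0}(2), $\log\det(I - K_f) = 0$, and the identity-operator blocks contribute $\log\det(I) = 0$. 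What survives is $-\rho^{(2)}(G_\phi) = \log\det(I - L_f)$, where $L_f$ is right-multiplication by $tJ_1(f)$ on $\bigl(\ell^2(G_\phi)\bigr)^n$.

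Finally I would exploit the block structure. Since the ordering of $\sfE(\Gamma)$ lists edges of lower strata first, $J_1(f)$ is lower block triangular with diagonal blocks $J_1(f)_s$, hence $I - tJ_1(f)$ is lower block triangular with diagonal blocks $I - tJ_1(f)_s$, i.e.\ $I - L_f$ has the block form covered by the determinant formula \eqref{eq:det-block}: with $T_1, T_2$ the operators for two consecutive diagonal blocks (both injective) and $T_3$ the off-diagonal operator, $\det$ of the triangular operator is the product of the diagonal determinants. Iterating \eqref{eq:det-block} over all $S$ blocks gives $\det(I - L_f) = \prod_{s=1}^S \det(I - L_{f,\,s})$, and taking logarithms yields \eqref{eq:det-splitting}. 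The main obstacle I anticipate is bookkeeping rather than conceptual: getting the precise identification of $C_*(\widetilde{M_f})$ with a complex whose differentials are literally the matrices $I - tJ_0(f)$ and $I - tJ_1(f)$ (up to harmless changes of basis that do not affect Fuglede--Kadison determinants, since such determinants are invariant under composition with operators of determinant $1$), and verifying the injectivity hypotheses needed to apply \eqref{eq:det-block} at the level of the block-triangular decomposition of $I - L_f$ — one needs each $I - L_{f,\,s}$ injective, which for the exponentially growing strata ultimately follows from the Perron--Frobenius analysis and for the others is built into the train-track structure via Lemma~\ref{lem:rtt}.
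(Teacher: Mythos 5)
Your outline follows the same overall route as the paper: model $EG_\phi$ by the universal cover of the mapping torus of $f$, reduce to $\log\det(I - L_f)$ using vanishing of $\ell^2$--homology and the triviality of $\det(I - K_f)$, and then split $\det(I - L_f)$ along strata via the block-triangular form of $I - tJ_1(f)$ and \eqref{eq:det-block}. However, there are two places where your sketch elides the actual mechanism and, as written, would not go through.

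First, the step ``computing that alternating sum, the contributions of $\det(I - K_f)$ and the identity blocks cancel or vanish'' does not work as a direct computation. The chain complex of $\widetilde{M_f}$ is $0 \to \bigl(\ZZ[G_\phi]\bigr)^n \to \bigl(\ZZ[G_\phi]\bigr)^{n+m} \to \bigl(\ZZ[G_\phi]\bigr)^m \to 0$, and its boundary maps $c_2 = [\,I - L_f \ \ \bd\,]$ and $c_1 = \bigl[\begin{smallmatrix} -\bd \\ I - K_f \end{smallmatrix}\bigr]$ are \emph{rectangular}. The Fuglede--Kadison determinants entering the alternating sum $-\sum_p (-1)^p \log\det(c_p)$ are therefore determinants of rectangular operators, and there is no straightforward way to peel off $\log\det(I - K_f)$ and $\log\det(I)$ from $\log\det(c_1)$ and $\log\det(c_2)$. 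What makes this rigorous in the paper is the construction of a short exact sequence of chain complexes $0 \to B_* \to C_* \to D_* \to 0$ in which $B_*$ has the single (square) boundary map $I - K_f$ and $D_*$ has the single (square) boundary map $I - L_f$. After tensoring with $\ell^2(G_\phi)$, the hypotheses of the sum formula, Theorem~\ref{th:torsion sum}, are verified using the weakly exact long $\ell^2$--homology sequence, Theorem~\ref{th:vanish}, and Lemma~\ref{lem:J0}(1), and then Theorem~\ref{th:torsion sum} gives $\rho^{(2)}(C_*^{(2)}) = \rho^{(2)}(B_*^{(2)}) + \rho^{(2)}(D_*^{(2)})$. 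This short-exact-sequence-plus-sum-formula argument is the essential content you need to invoke; it is not a bookkeeping point.

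Second, your source for the injectivity of each $I - L_{f,\,s}$ is not correct. You attribute it to Perron--Frobenius for the exponentially growing strata and to Lemma~\ref{lem:rtt} for the rest, but neither of those provides an injectivity statement for an operator on $\bigl(\ell^2(G_\phi)\bigr)^{n_s}$. In the paper, injectivity of $I - L_f$ itself is a consequence of the vanishing $H_2^{(2)}(D_*^{(2)}) = 0$ obtained from the long exact sequence together with Lemma~\ref{lem:J0}(1); injectivity of the diagonal blocks is then read off from the lower block-triangular form, so that \eqref{eq:det-block} can be iterated inductively over the filtration. Replacing that argument by an appeal to Perron--Frobenius leaves a genuine gap.
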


\begin{proof}
The proof of Theorem~7.29 in \cite{bk:Luck02} establishes that 
\begin{equation}
\label{eq:luck-det}
\rho^{(2)}(G_{\phi}) = -\log \det \bigl(I - L_{f}\bigr) 
\end{equation} 
in the case that $\Gamma = \bigvee_{r=1}^{\rank(\FF)} S^{1}$.  The argument for \eqref{eq:luck-det} essentially goes through in the general case of a graph; we sketch the essential pieces here for completeness.  

We use the mapping torus of $f \from \Gamma \to \Gamma$ as our topological model for $BG_{\phi}$.  Let $C_{*}$ be the cellular chain complex for $EG_{\phi}$ using the natural cell structure on $BG_{\phi}$.  Let \[\bd \from \ZZ [\sfE(\widetilde{\Gamma})] = \bigl( \ZZ[\FF] \bigr)^{n} \to \ZZ[\sfV(\widetilde{\Gamma})] = \bigl(\ZZ[\FF]\bigr)^{m} \] be the boundary map defined on the basis by $\bd(e) = \bd_{1}(e) - \bd_{0}(e)$.  There is a short exact sequence of $\ZZ[G_{\phi}]$--chain complexes as shown below:
\begin{equation}
\label{eq:ses}
\xymatrix@R=6pt@C=10pt{ & B_{*} & C_{*} & D_{*} & \\
& 0\ar[dd] & 0\ar[dd] & 0\ar[dd] & \\
\\
0\ar[r] & 0\ar[r]\ar[ddd] & \bigl(\ZZ[G_{\phi}]\ar[r]\bigr)^{n}\ar[ddd]^{[I - L_{f} \ \bd ]} & \bigl(\ZZ[G_{\phi}]\ar[r]\bigr)^{n}\ar[r]\ar[ddd]^{I - L_{f}} & 0 \\
\\ \\
0\ar[r] & \bigl(\ZZ[G_{\phi}]\ar[r]\bigr)^{m}\ar[r]\ar[ddd]^{I - K_{f}} & \bigl(\ZZ[G_{\phi}]\ar[r]\bigr)^{n} \oplus \bigl(\ZZ[G_{\phi}]\ar[r]\bigr)^{m} \ar[r]\ar[ddd]^{\scriptsize \begin{bmatrix} -\bd \\ I\!-\!K_{f} \end{bmatrix}} & \bigl(\ZZ[G_{\phi}]\ar[r]\bigr)^{n}\ar[r]\ar[ddd] & 0 
\\ \\
\\
0\ar[r] & \bigl(\ZZ[G_{\phi}]\ar[r]\bigr)^{m}\ar[r]\ar[dd] & \bigl(\ZZ[G_{\phi}]\ar[r]\bigr)^{m}\ar[r]\ar[dd] & 0\ar[r]\ar[dd] & 0 \\
\\
& 0 & 0 & 0 &
}
\end{equation} 
The short exact sequence $0 \to B_{*} \to C_{*} \to D_{*} \to 0$ is still exact after tensoring with $\ell^{2}(G_{\phi})$: 
\begin{equation}
\label{eq:l2-ses}
0 \to B_{*}^{(2)} \to C_{*}^{(2)} \to D_{*}^{(2)} \to 0.  
\end{equation}
The weakly exact long $\ell^{2}$--homology sequence (\cite[Theorem~1.21]{bk:Luck02}) associated to \eqref{eq:l2-ses} implies that $H_{2}^{(2)}(D_{*}^{(2)}) = H_{1}^{(2)}(B_{*}^{(2)})$ as $H_{*}^{(2)}(C_{*}^{(2)}) = 0$ (Theorem~\ref{th:vanish}).  By Lemma~\ref{lem:J0}~(1), $H_{1}^{(2)}(B_{*}^{(2)}) = 0$ and hence $H_{2}^{(2)}(D_{*}^{(2)}) = 0$ as well and so $I - L_{f}$ is injective.

The sum formula for $\ell^{2}$--torsion (Theorem~\ref{th:torsion sum}) implies that $\rho^{(2)}(C_{*}^{(2)}) = \rho^{(2)}(B_{*}^{(2)})  + \rho^{(2)}(D_{*}^{(2)})$.
As $\rho^{(2)}(B_{*}^{(2)}) = \log \det (I - K_{f}) = 0$ by Lemma~\ref{lem:J0}~(2), we have: \[\rho^{(2)}(G_{\phi}) = \rho^{(2)}(C_{*}^{(2)}) = \rho^{(2)}(D_{*}^{(2)}) =
-\log \det (I - L_{f}) \] verifying \eqref{eq:luck-det} in general.

Let $n' = n - n_{S}$ and let $I - tJ_{1}(f)^{[n']}$ by the $n' \times n'$ submatrix of $I - tJ_{1}(f)$ where $1 \leq i,j \leq n'$.  Then in block form we can write:
\[ I - tJ_{1}(f) = \begin{bmatrix}
I - tJ_{1}(f)^{[n']} & 0 \\ A & I - tJ_{1}(f)_{S}
\end{bmatrix} \]
where $A$ has size $n_{S} \times n'$.  Let $L_{f}^{[n']}$ be the restriction of $L_{f}$ to $\bigl( \ell^{2}(G_{\phi}) \bigr)^{n'}$.  Injectivity of $I - L_{f}$ implies that both $I - L_{f}^{[n']}$ and $I - L_{f, \, S}$ are injective.  Therefore:
\begin{equation*}
\det (I - L_{f}) = \det(I - L_{f}^{[n']}) \det(I - L_{f, \, S})
\end{equation*}  
using \eqref{eq:det-block}.  By induction, the theorem follows.
\end{proof}

Combining Lemma~\ref{lem:det-bound} and Theorem~\ref{th:det-splitting} we get:

\begin{corollary}
\label{co:compute}
Suppose $f \from \Gamma \to \Gamma$ is as in Convention~\ref{con:tt}.  Then:
\begin{equation}
\label{eq:compute}
-\rho^{(2)}(G_{\phi}) \leq \sum_{s=1}^{S} n_{s}\log \opnorm{I - L_{f, \, s}}.
\end{equation}
\end{corollary}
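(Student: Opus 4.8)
The plan is to combine the two pieces that have just been assembled. Theorem~\ref{th:det-splitting} gives the exact formula
\[
-\rho^{(2)}(G_{\phi}) = \sum_{s=1}^{S} \log \det \bigl(I - L_{f, \, s}\bigr),
\]
so it suffices to bound each summand. The operator $I - L_{f, \, s}$ is a map of Hilbert--$G_{\phi}$--modules from $\bigl(\ell^{2}(G_{\phi})\bigr)^{n_{s}}$ to itself, and Lemma~\ref{lem:det-bound} (applied with $m = n = n_{s}$) yields
\[
\det \bigl(I - L_{f, \, s}\bigr) \leq \opnorm{I - L_{f, \, s}}^{n_{s}}.
\]
Taking logarithms gives $\log \det \bigl(I - L_{f, \, s}\bigr) \leq n_{s} \log \opnorm{I - L_{f, \, s}}$, and summing over $s$ gives the claimed inequality~\eqref{eq:compute}.

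First I would note that the determinants appearing here are nonzero: this is exactly what is established inside the proof of Theorem~\ref{th:det-splitting}, where injectivity of $I - L_{f}$ is deduced from the vanishing of the relevant $\ell^{2}$--homology (Theorem~\ref{th:vanish}) together with Lemma~\ref{lem:J0}(1), and then injectivity of each $I - L_{f, \, s}$ follows by passing to the block-triangular decomposition. So every term in~\eqref{eq:det-splitting} is the logarithm of a positive real number and the logarithms are all well-defined. Then I would invoke Lemma~\ref{lem:det-bound} termwise, being careful that the hypotheses of that lemma only require $I - L_{f, \, s}$ to be a bounded $G_{\phi}$--equivariant operator between finitely generated Hilbert--$G_{\phi}$--modules, which it is, since right-multiplication by the matrix $tJ_{1}(f)_{s} \in \Mat_{n_{s}}(\ZZ[G_{\phi}])$ defines a bounded operator (as remarked in Section~\ref{sec:torsion} for matrices over $\CC[G]$).

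There is essentially no obstacle here; the corollary is a one-line consequence of the two cited results. The only point requiring any care is the bookkeeping: making sure that the exponent in Lemma~\ref{lem:det-bound} is indeed $n_{s}$ (the target dimension of the operator $I - L_{f, \, s}$) and not $n$, and that the sum over $s$ in Theorem~\ref{th:det-splitting} matches the sum over $s$ in~\eqref{eq:compute} index for index. Since both sums run over $1 \leq s \leq S$ and the $s$th term of the first is bounded by the $s$th term of the second, the inequality follows immediately. The proof can therefore be stated in two or three lines, essentially: ``By Theorem~\ref{th:det-splitting} and Lemma~\ref{lem:det-bound}, $-\rho^{(2)}(G_{\phi}) = \sum_{s=1}^{S} \log \det(I - L_{f, \, s}) \leq \sum_{s=1}^{S} \log \opnorm{I - L_{f, \, s}}^{n_{s}} = \sum_{s=1}^{S} n_{s} \log \opnorm{I - L_{f, \, s}}$.''
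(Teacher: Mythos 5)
Your proof is correct and matches the paper's approach exactly: the paper introduces Corollary~\ref{co:compute} with the line ``Combining Lemma~\ref{lem:det-bound} and Theorem~\ref{th:det-splitting} we get,'' which is precisely the two-step argument you give. Your additional remarks on injectivity and dimension-bookkeeping are accurate but not strictly necessary, since those points are already handled inside the proof of Theorem~\ref{th:det-splitting}.
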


\begin{remark}
\label{rem:poly}
The vanishing of $\rho^{(2)}(G_{\phi})$ for polynomially growing $\phi \in \Out(\FF)$ can be extracted from the proofs of Lemma~\ref{lem:J0}~(2) and Theorem~\ref{th:det-splitting}.  The key point is that in this case, after passing to some power of $f$, the matrix $I - J_{1}(f)_{s}$ is a $1 \times 1$ matrix whose entry is either 1 or $1 - tx_{s}$ for some $x_{s} \in \FF$ depending on whether or not $M(f)_{s}$ is the zero matrix.  The determinant of such an operator is 1.  Theorem~\ref{th:bound} also shows that the $\ell^{2}$--torsion vanishes in this case as well.
\end{remark}

\section{Upper bound on $-\rho^{(2)}(G_{\phi})$}
\label{sec:bounds}

We can now prove the main result of this article.

\begin{theorem}
\label{th:bound}
Suppose $f \from \Gamma \to \Gamma$ is a relative train-track map with respect to the filtration $\emptyset = \Gamma_{0} \subset \Gamma_{1} \subset \cdots \subset \Gamma_{S} = \Gamma$ representing the outer automorphism $\phi \in \Out(\FF)$.  Let $n_{s}$ denote the number of edges of $\Gamma_{s} - \Gamma_{s-1}$, $\lambda(f)_{s}$ the Perron--Frobenius eigenvalue of $M(f)_{s}$ and $\PF(f)$ the set of indices of the exponentially growing strata. Then:
\begin{equation}
\label{eq:bound}
-\rho^{(2)}(G_{\phi}) \leq \sum_{s \in \PF(f)} n_{s} \log \lambda(f)_{s}.
\end{equation} 
In particular, if $f \from \Gamma \to \Gamma$ does not have an exponentially growing stratum then $\rho^{(2)}(G_{\phi}) = 0$.
\end{theorem}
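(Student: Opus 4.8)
The plan is to bootstrap the crude estimate of Corollary~\ref{co:compute} into a sharp one using the behavior of $\ell^{2}$--torsion under passage to powers (Theorem~\ref{th:torsion}): replace $f$ by $f^{k}$, apply Corollary~\ref{co:compute}, divide by $k$, and let $k \to \infty$, the point being that the relative train-track hypothesis, via Lemma~\ref{lem:rtt}, forces the stratum transition matrices of $f^{k}$ to be exactly the $k$-th powers $\bigl(M(f)_{s}\bigr)^{k}$, so that the Perron--Frobenius eigenvalues surface in the limit through Lemma~\ref{lem:I + A}. Concretely, fix $k \geq 1$ and apply Corollary~\ref{co:compute} to $f^{k}$; this is legitimate since $f^{k}$ represents $\phi^{k}$, respects the same filtration, and fixes the image of every vertex (as $f^{k}(v) = f(v)$ for vertices $v$), so it satisfies the hypotheses of Convention~\ref{con:tt} that are actually used in the proof of Corollary~\ref{co:compute}. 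Combining the resulting estimate with Theorem~\ref{th:torsion} yields
\begin{equation*}
-k\rho^{(2)}(G_{\phi}) \;=\; -\rho^{(2)}(G_{\phi^{k}}) \;\leq\; \sum_{s=1}^{S} n_{s}\, \log \opnorm{I - L_{f^{k}, \, s}}.
\end{equation*}

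Next I would estimate each $\opnorm{I - L_{f^{k}, \, s}}$. Since $I - L_{f^{k}, \, s}$ is right-multiplication by $I - tJ_{1}(f^{k})_{s}$, Proposition~\ref{prop:operator bound} bounds its norm by that of right-multiplication by the matrix $\bigl(I - tJ_{1}(f^{k})_{s}\bigr)_{\ell^{1}}$ on $\CC^{n_{s}}$. The defining homomorphism $G_{\phi^{k}} \to \ZZ$ sends the identity to $0$ and every element of the form $tw$ with $w \in \FF$ to $1$, so no cancellation occurs between $I$ and $tJ_{1}(f^{k})_{s}$; hence $\bigl(I - tJ_{1}(f^{k})_{s}\bigr)_{\ell^{1}} = I + \bigl(J_{1}(f^{k})_{s}\bigr)_{\ell^{1}} = I + M(f^{k})_{s}$ by Proposition~\ref{prop:transition}, and then Lemma~\ref{lem:rtt} --- the one essential appeal to the train-track hypothesis --- gives $M(f^{k})_{s} = \bigl(M(f)_{s}\bigr)^{k}$. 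Dividing the displayed inequality by $k$ we obtain
\begin{equation*}
-\rho^{(2)}(G_{\phi}) \;\leq\; \sum_{s=1}^{S} n_{s} \cdot \frac{1}{k} \log \opnorm{\, I + \bigl(M(f)_{s}\bigr)^{k}} \qquad\text{for all } k \geq 1.
\end{equation*}

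Finally I would let $k \to \infty$. If $s \in \PF(f)$ then $M(f)_{s}$ is irreducible with spectral radius $\lambda(f)_{s} > 1$, so Lemma~\ref{lem:I + A} gives $\tfrac{1}{k}\log\opnorm{I + (M(f)_{s})^{k}} \to \log\lambda(f)_{s}$. If $s \notin \PF(f)$ then $r(M(f)_{s}) \leq 1$, so by \eqref{eq:matrix norm} the norms $\opnorm{(M(f)_{s})^{k}}$ grow subexponentially and $\limsup_{k}\tfrac{1}{k}\log\opnorm{I + (M(f)_{s})^{k}} \leq 0$; these non-positive contributions only weaken the bound and may be discarded. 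Passing to the limit thus yields \eqref{eq:bound}. For the final assertion, if $\PF(f) = \emptyset$ the right-hand side of \eqref{eq:bound} is $0$, so $-\rho^{(2)}(G_{\phi}) \leq 0$, and together with the already-known inequality $0 \leq -\rho^{(2)}(G_{\phi})$ this forces $\rho^{(2)}(G_{\phi}) = 0$.

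I expect the first step to be the main obstacle: a power of a reduced relative train-track map need not itself be a relative train-track map, so one must check that the proofs of Theorem~\ref{th:det-splitting} and Corollary~\ref{co:compute} invoke only that $f$ respects the filtration and satisfies the vertex and edge ordering conditions of Convention~\ref{con:tt} (all of which hold for $f^{k}$), the single genuinely train-track ingredient needed here --- the identity $M(f^{k})_{s} = (M(f)_{s})^{k}$ --- being supplied by Lemma~\ref{lem:rtt} applied to $f$ itself. The remaining steps, the no-cancellation observation for the $\ell^{1}$--matrix and the two cases of the $k \to \infty$ limit, are routine consequences of Lemma~\ref{lem:I + A} and the matrix-norm identity \eqref{eq:matrix norm}.
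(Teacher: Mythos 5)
Your proposal follows the paper's proof step for step: replace $f$ by $f^{k}$, apply Corollary~\ref{co:compute}, divide by $k$ via Theorem~\ref{th:torsion}, bound the operator norm by $\opnorm{I + (M(f)_{s})^{k}}$ using Proposition~\ref{prop:operator bound}, Proposition~\ref{prop:transition} and Lemma~\ref{lem:rtt}, and let $k \to \infty$ with Lemma~\ref{lem:I + A}. Your treatment of $s \notin \PF(f)$ via a $\limsup$ and \eqref{eq:matrix norm} is a harmless variant of the paper's observation that such $M(f)_{s}$ has finite order; both work. Your discussion of why $f^{k}$ need not be a relative train-track map, and why this does not matter because only the filtration-respecting structure and the Convention~\ref{con:tt} conditions are invoked in Corollary~\ref{co:compute} (with Lemma~\ref{lem:rtt} applied to $f$ itself supplying $M(f^{k})_{s} = (M(f)_{s})^{k}$), is exactly the right thing to worry about.

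There is, however, one genuine oversight. You assert that $f^{k}(v) = f(v)$ for vertices $v$ and conclude $f^{k}$ satisfies Convention~\ref{con:tt}. This identity holds only if $f$ already fixes the image of every vertex, which is a hypothesis of Convention~\ref{con:tt} but is \emph{not} automatic for an arbitrary relative train-track map (a vertex could have a periodic, non-fixed $f$-orbit). The paper handles this by first replacing $f$ by a suitable power so that all periodic vertices become fixed, after noting --- via Theorem~\ref{th:torsion} for the left-hand side and Lemma~\ref{lem:rtt} for the Perron--Frobenius eigenvalues on the right-hand side --- that both sides of \eqref{eq:bound} scale linearly under $f \mapsto f^{k}$, so this preliminary replacement is harmless. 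You need this step before invoking Corollary~\ref{co:compute} on the iterates. A minor further point: once in Convention~\ref{con:tt}, the operator associated to $f^{k}$ is right-multiplication by $t^{k}J_{1}(f^{k})_{s}$ in $\ZZ[G_{\phi}]$ (not $tJ_{1}(f^{k})_{s}$); your ``no cancellation'' argument via the $\ZZ$--grading is correct, but should be phrased with $t^{k}$, or equivalently by noting as the paper does that the entries of $J_{1}(f^{k})$ lie in $\ZZ[\FF]$ so all entries of $t^{k}J_{1}(f^{k})$ vanish at the identity.
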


\begin{proof} 
Let $f \from \Gamma \to \Gamma$ be a relative train-track map representing $\phi$.  By Theorem~\ref{th:torsion} for all $k \in \NN$, we have $\rho^{(2)}(G_{\phi^{k}}) = k\rho^{(2)}(G_{\phi})$.  By Lemma~\ref{lem:rtt}, for all $1 \leq s \leq S$, and $k \in \NN$, we have $\log \lambda(f^{k})_{s} = \log \bigl(\lambda(f)_{s}\bigr)^{k} = k \log \lambda(f)_{s}$.  Thus we are free to replace $f$ by a power if necessary.  Hence we replace $f$ by a power to assume that all periodic vertices of $\Gamma$ are fixed.  Therefore, we can assume that $f$ satisfies Convention~\ref{con:tt} and we may apply Corollary~\ref{co:compute}.   

For $k \in \NN$, let $L_{f^{k},s} \from \bigl( \ell^{2}(G_{\phi})\bigr)^{n_{s}} \to \bigl( \ell^{2}(G_{\phi})\bigr)^{n_{s}}$ be the operator induced by right-multiplication by $t^{k}J_{1}(f^{k})_{s}$.  By Corollary~\ref{co:chain rule}, we have $t^{k}J_{1}(f^{k})_{s} = \bigl(tJ_{1}(f)_{s}\bigr)^{k}$ for all $k \in \NN$ and hence $L_{f^{k}, \, s} = \bigl(L_{f,\, s}\bigr)^{k}$.    

By Proposition~\ref{prop:transition}, we have $\bigl(J_{1}(f^{k})_{s}\bigr)_{\ell^{1}} = M(f^{k})_{s}$.  Hence, by Lemma~\ref{lem:rtt} we have $\bigl(t^{k}J_{1}(f^{k})_{s}\bigr)_{\ell^{1}} = M(f^{k})_{s} =\bigl(M(f)_{s}\bigr)^{k}$ for all $k \in \NN$.  Since the entries in $J_{1}(f^{k})$ are in $\ZZ[\FF]$, the value at the identity in $G_{\phi}$ of any entry in $t^{k}J_{1}(f^{k})$ is zero.  Thus we find that:
\begin{equation}
\label{eq:no cancel}
\bigl(I - t^{k}J_{1}(f^{k})_{s}\bigr)_{\ell^{1}} = I + \bigl(t^{k}J_{1}(f^{k})_{s}\bigr)_{\ell^{1}} = I + \bigl(M(f)_{s}\bigr)^{k}.
\end{equation}

Now using Theorem~\ref{th:torsion}, Corollary~\ref{co:compute}, Proposition~\ref{prop:operator bound} and \eqref{eq:no cancel} we find that for all $k \in \NN$:
\begin{align*}
-\rho^{(2)}(G_{\phi}) & = -\frac{1}{k} \rho^{(2)}(G_{\phi^{k}}) \\
& \leq \frac{1}{k}\sum_{s=1}^{S} n_{s}\log \opnorm{I - L_{f^{k}, \, s}}\\
& \leq \sum_{s=1}^{S} n_{s}\log \opnorm{\bigl(I - t^{k}J_{1}(f^{k})_{s}\bigr)_{\ell^{1}}}^{1/k} \\
& = \sum_{s=1}^{S} n_{s} \log \opnorm{I + \bigl(M(f)_{s}\bigr)^{k}}^{1/k}.
\end{align*}
For $s \in \PF(f)$, we have $r(M(f)_{s}) = \lambda(f)_{s} > 1$, and so by Lemma~\ref{lem:I + A} it follows that $\| I + \bigl(M(f)_{s}\bigr)^{k} \|^{1/k} \to \lambda(f)_{s}$ as $k \to \infty$.  For $s \notin \PF(f)$, $M(f)_{s}$ has finite order.  Therefore $\| I + \bigl(M(f)_{s}\bigr)^{k} \|$ is bounded and hence $\| I + \bigl(M(f)_{s}\bigr)^{k} \|^{1/k} \to 1$ as $k \to \infty$.  

This proves \eqref{eq:bound}, and hence the theorem. 
\end{proof}



\bibliography{torsion-upper}
\bibliographystyle{acm}

\end{document}